\newcommand{\pr}{\mathbb{P}}
\newcommand{\bE}{\mathbb{E}}
\newcommand{\n}{\mathbb{N}}
\newcommand{\mR}{\mathcal{R}}
\newcommand{\mB}{\mathcal{B}}
\newcommand{\1}{{\bf 1}}
\begin{document}

\title*{Limit theorem for reflected  random walks}
\author{Hoang-Long Ngo and Marc Peign\'e}
\institute{Hoang-Long Ngo \at Hanoi National University of Education, 136 Xuan Thuy - Cau Giay - Hanoi - Vietnam, \email{ngolong@hnue.edu.vn}
\and Marc Peign\'e \at Institut Denis Poisson UMR 7013,  Universit\'e de Tours, Universit\'e d'Orl\'eans, CNRS  France \email{peigne@univ-tours.fr}}
%
%
\maketitle

\abstract*{Let $\xi_n, n \in \mathbb N$  be a sequence of i.i.d.  random variables with values in $\mathbb Z$.
	The associated random walk  on $\mathbb Z$ is $S(n)= \xi_1 +   \cdots +\xi_{n+1}$ and the corresponding
	``reflected walk''    on  $\mathbb N_0$ is  the Markov chain $X=(X(n))_{n\geq 0}$ given by  $X(0) = x \in \mathbb N_0$ and
	$X(n+1) = \vert X(n) + \xi_{n+1}\vert $  for  $n \geq 0$. It is well know that the reflected walk $(X(n))_{n \geq 0}$ is  null-recurrent   when the  $\xi_n$ are square integrable and centered. In this paper, we prove that the  process $(X(n))_{n \geq 0}$, properly rescaled, converges in distribution towards the reflected  Brownian motion on $\mathbb R^+$, when  $\mathbb E[\xi_n^2]<+\infty,   \bE[(\xi_n^-)^{3 }] < + \infty$  and the $\xi_n$ are aperiodic and centered.}

\abstract{Let $\xi_n, n \in \mathbb N$  be a sequence of i.i.d.  random variables with values in $\mathbb Z$.
	The associated random walk  on $\mathbb Z$ is $S(n)= \xi_1 +   \cdots +\xi_{n+1}$ and the corresponding
	``reflected walk''    on  $\mathbb N_0$ is  the Markov chain $X=(X(n))_{n\geq 0}$ given by  $X(0) = x \in \mathbb N_0$ and
	$X(n+1) = \vert X(n) + \xi_{n+1}\vert $  for  $n \geq 0$. It is well know that the reflected walk $(X(n))_{n \geq 0}$ is  null-recurrent   when the  $\xi_n$ are square integrable and centered. In this paper, we prove that the  process $(X(n))_{n \geq 0}$, properly rescaled, converges in distribution towards the reflected  Brownian motion on $\mathbb R^+$, when  $\mathbb E[\xi_n^2]<+\infty,   \bE[(\xi_n^-)^{3 }] < + \infty$  and the $\xi_n$ are aperiodic and centered. }

\section{Introduction and Notations}
Let $(\xi_n)_{n\geq 1}$ be a sequence of $\mathbb Z$-valued, independent and identically distributed random variables,  with common law $\mu$ defined on a probability space $(\Omega, \mathcal{F}, \pr)$.
We denote $S=(S(n))_{n \geq 0}$ the classical random walks with steps $\xi_k$ defined by $S(0)=0$ and $S(n)= \xi_1+\ldots +\xi _n$ for any $n \geq 1$.

Throughout  this paper, we denote $\mathbb N_0$ the set of non-negative integers and we consider the   {\it  reflected random walk } $(X(n))_{n\geq 0}$  on $\mathbb N_0$ defined by 
\begin{equation*} \label{defX}
X(n+1) = |X(n) + \xi_{n+1}|, \quad \text{ for } n \geq 0,
\end{equation*} 
where $X(0)$ is a $\mathbb{N}_0$-valued random variables. 
When $X(0)= x$ $\pr$-a.s., with $x \in \mathbb{N}_0$, the process $(X(n))_{n\geq 0}$ is also denoted {by}  $(X^x(n))_{n\geq 0}$.   It   evolves  as the random walk $x+S(n)$ as long as it stays non negative. When $x+S(n)$ enters the set of negative integers, the sign of its value is changed; the same construction thus  applies starting from $\vert x+S(n)\vert, \ldots $  and so on.  

The process $(X^x(n))_{n\geq 0}$ is a Markov chain on $\mathbb{N}_0$ starting from $x$. 
Several papers describing its stochastic behavior have been published; we refer to \cite{PW} where the recurrence of the reflected random walk is studied under some conditions which are nearly to be optimal. The reader may find also several references therein.  

Firstly, $(X^x(n))_{n\geq 0}$ has some similarities with the classical random walk on $\mathbb R$; for instance, a strong law of large numbers holds, namely 
$$
\lim_{n \to +\infty} {X^x(n)\over n}=0 \quad \mathbb P\text{-a.s.}
$$
when $\mathbb E[\vert \xi_n\vert]<+\infty$ and $\mathbb E[  \xi_n\ ]=0$ (see Lemma \ref{SLLN} in section  3).  Nevertheless, in contrast to what holds for the classical random walk on $\mathbb R$,  this does not yield to the recurrence of $(X^x(n))_{n\geq 0}$. In  \cite{PW}, it is proved that  the process $(X^x(n))_{n \geq 0}$ is null-recurrent when  
$\mathbb E [\vert \xi_n\vert ^{3/2}]<+\infty$ and $\mathbb E [  \xi_n]=0$ and that $(X^x(n))_{n \geq 0}$ may be transient  when $\mathbb E [\vert \xi_n\vert ^{{3/2}}]=+\infty$, even if   $\mathbb E [\vert \xi_n\vert ^{{3/2}-\epsilon}]<+\infty$ for any $\epsilon >0$.
The reader can find in  \cite{Kemperman}   a necessary and sufficient condition for the recurrence  of $(X^x(n))_{n\geq 0}$  (see Theorem 4.6) but this condition  cannot be reduced to the existence of some moments.

Once the strong law of large number holds, it is natural to study  the oscillations of the process around its expectation. Let us state our result.

\begin{theorem}\label{meander}
	Let $(\xi_n)_{n \geq 1}$ be  a sequence of $\ \mathbb Z$-valued   i.i.d. random variables such that
	
	\begin{enumerate} 
		\item [{\bf A1.}]  	$\bE[\xi_n^2]=\sigma^2 <+ \infty$ and  $\ \bE[ (\xi_n^-)^{3 }] <+ \infty \quad ^(\footnote{ $\xi_n^-=\max(0, -\xi_n)$ denotes the negative part of $\xi_n$}^)$;
		
		\item [{\bf A2.}]	$\bE[\xi_n] = 0$; 
		
		\item [{\bf A3.}] The distribution of the $\xi_n$ is strongly aperiodic, i.e. the support of the distribution of  $\xi_n$ is not included in the coset of a proper subgroup of $\mathbb Z$.	
	\end{enumerate}
	Let $(X(t) )_{t\geq0}$ be the continuous time process   constructed  from the sequence $(X(n))_{n \geq 0}$ by  linear interpolation between the values at integer points. Then,  as $n \to +\infty$,  the sequence of stochastic processes $(X_n(t))_{n \geq 1}$, defined by
	
	\[
	X_n(t):= {1\over \sigma \sqrt{n}}X(nt), \quad n \geq 1, 0\leq t\leq 1, 
	\]
	weakly converges  in the space of continuous functions  on $[0, 1]$ to the absolute value $(\vert B(t)\vert)_{t \geq 0}$ of the Brownian motion on $\mathbb R$.
\end{theorem}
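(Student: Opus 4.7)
The approach is to encode the reflection through a Skorokhod-type decomposition and identify the rescaled limit as the unique solution to a reflection problem driven by Brownian motion. From the defining recursion $X(k)=|X(k-1)+\xi_k|$ one obtains
\[
X(n)=X(0)+S(n)+L(n), \qquad L(n):= 2\sum_{k=1}^n (X(k-1)+\xi_k)^-,
\]
where $L$ is non-decreasing and its increments occur only at ``reflection times'' $k$ where $X(k-1)+\xi_k<0$. Setting $S_n(t):=S(\lfloor nt\rfloor)/(\sigma\sqrt n)$ and $L_n(t):=L(\lfloor nt\rfloor)/(\sigma\sqrt n)$, and noting that $X(0)/(\sigma\sqrt n)$ is negligible, we have $X_n=S_n+L_n+o(1)$ in sup-norm. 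By Donsker $S_n\Rightarrow B$, and the plan is to show that $(X_n,L_n)$ converges jointly to the Skorokhod reflection of $B$ at $0$, whose first coordinate has the law of $|B|$ by L\'evy's identity.

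Tightness of $(X_n,S_n,L_n)$ in $C[0,1]^3$ follows from the $1$-Lipschitz estimate $|X(k)-X(k-1)|\leq |\xi_k|$, which forces the modulus of continuity of $X_n$ to be dominated by that of $S_n$, hence Donsker-tight; then $L_n=X_n-S_n+o(1)$ inherits tightness. Passing to a subsequence via Prohorov and invoking the Skorokhod representation theorem, one may assume an a.s.\ uniform limit $(X_\infty,L_\infty,B)$ with $X_\infty=B+L_\infty$, $X_\infty\geq 0$, $B$ a standard Brownian motion, and $L_\infty$ continuous non-decreasing. The main obstacle, and the only point at which the hypothesis $\bE[(\xi_n^-)^3]<+\infty$ is genuinely used, is to prove that $L_\infty$ grows only where $X_\infty=0$: $\int_0^T \1_{\{X_\infty(s)>\epsilon\}}\,\rd L_\infty(s)=0$ for every $\epsilon>0$ and $T>0$.

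After approximating the discontinuous indicator by a continuous cut-off and using weak convergence of the (monotone) measures $\rd L_n$, this reduces to showing that $\frac{1}{\sigma\sqrt n}\sum_{k=1}^{\lfloor nT\rfloor}(X(k-1)+\xi_k)^-\1_{\{X(k-1)>\epsilon\sigma\sqrt n\}}$ tends to $0$ in probability. At a reflection time $k$ with $X(k-1)>\epsilon\sigma\sqrt n$ one necessarily has $\xi_k^->\epsilon\sigma\sqrt n$, and the jump size $(X(k-1)+\xi_k)^-$ is bounded by $\xi_k^-$; hence the expectation of the sum is at most $\frac{2nT}{\sigma\sqrt n}\bE[\xi_n^-\1_{\{\xi_n^->\epsilon\sigma\sqrt n\}}]$, which by Markov's inequality is bounded by $2T\,\bE[(\xi_n^-)^3]/(\epsilon^2\sigma^3\sqrt n)$ and vanishes as $n\to\infty$. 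Any $(2+\delta)$-moment on $\xi_n^-$ would suffice, but square integrability alone would not---this is precisely the calibration of \textbf{A1}. Once this reflection property is secured, $(X_\infty,L_\infty)$ is the unique solution of the Skorokhod problem on $[0,+\infty)$ driven by $B$, so $L_\infty(t)=-\min_{s\leq t}(B(s)\wedge 0)$, and $X_\infty$ is equal in law to $|B|$ by L\'evy's identity. Since every subsequential limit has the same law, the full sequence $X_n$ converges weakly in $C[0,1]$ to $|B|$. The strong aperiodicity hypothesis \textbf{A3} is not needed in this skeleton; it enters elsewhere in the paper, for instance in the SLLN of Section 3 and in recurrence/hitting-$0$ arguments.
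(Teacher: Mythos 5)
Your argument is correct in outline, but it follows a genuinely different route from the paper. The paper never writes the Skorokhod decomposition: it conditions on the last reflection epoch before $[nt]$, analyses the sub-process $(X(r_k))_{k\geq 0}$ of reflections through an operator renewal theorem (Gou\"ezel) for its transition matrix $\mathcal R$ acting on the weighted space $\mathcal B_1$, and combines the resulting local estimates on reflection times ($\Sigma_n$, $\widehat\Sigma_n$, $\widetilde\Sigma_n$) with conditioned functional limit theorems (Brownian meander, bridges conditioned to stay positive) to compute all finite-dimensional marginals explicitly; tightness is obtained, exactly as in your argument, from $w_X\leq w_S$. Your route --- write $X(n)=X(0)+S(n)+L(n)$ with $L$ nondecreasing, extract a subsequential a.s.\ limit $(X_\infty,L_\infty,B)$, verify the complementarity condition $\int \1_{\{X_\infty>0\}}\,\rd L_\infty=0$ by the truncation estimate, then invoke uniqueness of the Skorokhod problem and L\'evy's identity --- is shorter, bypasses the renewal machinery entirely, and correctly dispenses with the aperiodicity hypothesis \textbf{A3}, which the paper needs only for its local limit theorems. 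One remark in your write-up is inaccurate, though harmlessly so: the key estimate does not in fact require $\bE[(\xi_n^-)^3]<+\infty$, nor even a $(2+\delta)$-moment; uniform integrability of $(\xi_n^-)^2$ already gives $\bE[\xi_n^-\1_{\{\xi_n^->\epsilon\sigma\sqrt n\}}]\leq (\epsilon\sigma\sqrt n)^{-1}\bE[(\xi_n^-)^2\1_{\{\xi_n^->\epsilon\sigma\sqrt n\}}]=o(n^{-1/2})$, so the sum is $o(1)$ under square integrability alone. This is a feature rather than a bug: your approach proves the theorem under weaker hypotheses than \textbf{A1}, and the authors themselves present the third-moment condition as an artefact of their method. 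What the paper's heavier approach buys is the spectral analysis of $\mathcal R$ and precise local renewal asymptotics for the reflection epochs, which are of independent interest but not needed for the invariance principle itself. The remaining details you elide (weak convergence of the measures $\rd L_n$ to $\rd L_\infty$, replacing the indicator by a continuous cut-off, and evaluating that cut-off at $X(k-1)$ rather than at the overshoot $X(k)$ --- harmless since $\xi_k^-= X(k-1)+X(k)\geq\max(X(k-1),X(k))$ at a reflection) are standard and correctly flagged.
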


Let us insist on the fact that   $ X^x(n)$ coincides  with   $x+S(n) $  as long as it stays non-negative, but after it may    differ  drastically. The sequence   of successive reflection  times of $ (X^x(n))_{n \geq 0}$ introduces  some strong  inhomogeneity on time and makes it necessary to adopt a totally different approach to prove  an invariance principle as stated above.

A model which is quite similar to $(X^n(x))_{n \geq 0}$ is the queuing process $(W^x(n))_{n \geq 0}$, also called the {\it Lindley process}, corresponding to the waiting times in a single server queue. We think to $(W^x(n))_{n \geq 0}$ as an absorbing random walk on $\mathbb N_0$; as $W^x(n)$, it evolves as the random walk $x+S(n)$ as long as it stays non-negative and, when it attempts to cross $0$ and become negative, the new value is reset to $0$ before continuing.  We refer to \cite{MarcLong}  for  precise  descriptions and variations on this process and follow  the same  strategy  to obtain the invariance principle.

The  excursions of $(W^x(n) )_{n \geq 0}$ and $(X^x(n))_{n \geq 0}$  between  two consecutively  times  of absorption-reflection
coincide with   some parts of the trajectory of  $(S(n))_{n \geq 0}$, up to a translation; thus, their study is related to   the fluctuations of $(S(n))_{n \geq 0}$. Hence, as in \cite{MarcLong},  we   introduce the sequence of strictly descending ladder epochs $(\ell_l )_{l \geq 0}$  of the random walk $ (S(n))_{n \geq 0}$ defined inductively by $\ell_0 =0$ and, for any $l \geq 1$,
\[
\ell_{l+1}  := \min\{n>\ell_l \mid S(n) < S{(\ell_l) }\}.
\]
When $\mathbb E[\vert \xi_n\vert ] <+\infty$ and $\mathbb E[ \xi_n ] =0, $ the random variables $\ell_{1}  ,\ell_{2}   -\ell_{1}, \ell_{3}  -\ell_{2},\ldots  $ are $\mathbb P$-a.s. finite and i.i.d. and  the same property holds for the random variables
$S{(\ell_{1})  }, S{(\ell_{2} ) }-S{(\ell_{1})  }, S{(\ell_{3})  }-S{(\ell_{2})},\ldots $. In other words,  the processes  $(\ell_l )_{l\geq 0}$ and $(S{(\ell_l )})_{l \geq 0}$  are random walks on $\mathbb N_0$  and  $\mathbb Z$  with respective distribution $\mathcal L(\ell_1)$  and $\mathcal L(S{(\ell_1) }  )$.

Let us briefly point out  the main difference between   $(W^x(n) )_{n \geq 0}$ and $(X(n))_{n \geq 0}$. At an absorption time, the value of the process 
$ W^x(n) $ is reset to $0$ before continuing as a classical random walk for a while: there is a total loss of memory of the past  after each absorption.  Rather,  at a reflection  time, the process $X^x(n)$ equals the absolute value of $x+S(n)$. This value   is the ``new'' starting point  of the process, for a while, and    has a great influence on the  next reflection time; in other words, the process always captures   some memory of the past at any time of reflection. This phenomenon has to be taken into account and requires a precise study of the sub-process $(X(r_k))_{k \geq 0} $ of   $(X(n))_{n \geq 0}$ corresponding to these successive  times  $(r_k)_{k \geq 0}$ of reflection; our strategy  consists in studying the spectrum  of the transition probabilities matrix $\mathcal R$ of $(X(r_k))_{k \geq 0}$, acting  on some   Banach space 
$\mathcal B=\mathcal B_\alpha $ of   functions from $\mathbb N_0$ to $\mathbb C$  with  
growth less than $x^\alpha$ at infinity, for some  $\alpha >0$  to be fixed. 
In particular, in order to apply  recent results on renewal sequences  \cite{Gouezel}, we need   precise estimates on the tail of distribution of the reflection times; this is the main reason  of the restrictive assumption  $\bE[ (\xi_n^-)^3] <+ \infty$ instead of moment of order 2, as we could expect.  More precisely,  throughout the paper, we need the following  properties to be satisfied:

\begin{enumerate}
	\item [(i)] The operator $\mathcal R$ acts on $\mathcal B_\alpha$. 
	
	This holds when $\mathbb E[\vert S(\ell_1)\vert ^{1+\alpha}]<+\infty$ and yields to the condition $\bE[ (\xi_n^-)^{2+\alpha}] <+ \infty$ (see Proposition \ref{propspectral}).
	\item [(ii)] The function $\mathbb N_0\to \mathbb N_0, x \mapsto x, $  belongs to $\mathcal B_\alpha$; this imposes the condition  $\alpha \geq 1$ (see Proposition \ref{renewalsequence}).
\end{enumerate}
Eventually, we  fix  $\alpha =1$ from Section \ref{meander} on.

\noindent {\bf Notations.}  {\it Throughout  the text, we use the following notations.
	Let $u=(u_n)_{n \geq 0}$ and $v=(v_n)_{n \geq 0}$ be two sequences of positive reals; we write
	
	$\bullet\quad $ {\it
		$u\stackrel{c}{\preceq}v $} (or simply $u \preceq v $) when $u_n \leq c v_n$ for some constant $c>0$ and $n$ large enough;
	
	$\bullet\quad $ $u_n \sim v_n$ when $\lim_{n\to+ \infty} \frac{u_n}{v_n} = 1$.  
	
	$\bullet\quad $ $u_n \approx v_n$ when $\lim_{n\to+ \infty} (u_n - v_n) = 0$.   
	
}

\noindent {\bf Acknowledgment}
H.-L. Ngo thanks the University of Tours for generous hospitality in the Instittue Denis
Poisson (IDP) and financial support in May 2019.  This article is a result of the research team with the title "Quantitative Research Methods  in Economics and Finance", Foreign Trade University, Ha Noi, Vietnam.

M. Peign\'e thanks the Vietnam Institute
for Advanced Studies in Mathematics (VIASM) and the Vietnam Academy of Sciences And
Technology (VAST) in Ha Noi for their kind and friendly hospitality and accommodation in
June 2018.

Thanks  are also due to M. Pollicott who proposed to publish this article in the Chaire Jean Morlet Series.

Both authors thank the referee for many helpful comments that improved the text and some proofs. Duy Tran Vo also pointed them several misprints.

\section{Fluctuations of random walks and auxiliary estimates}

\subsection{On the fluctuation of random walks} \label{sec:2.1} 

Let $h$ be the Green function of the random walk $ (S{(\ell_l) } )_{l \geq 0}$, called sometimes  the ``descending renewal function''   of $S$, defined by
\begin{equation*} \label{def:h}
h(x) = \begin{cases} \displaystyle \sum_{l=0}^{+\infty} \pr[S(\ell_l)\geq -x] & \text{if } \quad x \geq 0,\\
0 & \text{otherwise.}\end{cases}
\end{equation*} 

The function  $h$ is  harmonic for the random walk $(S(n))_{n \geq 0}$  killed when it reaches  the negative half line $(-\infty; 0]$; namely, for any $x \geq 0,$
\[
\bE[h(x+\xi_1); x+\xi_1>0  ] = h(x).
\]
This holds for any oscillating random walk, possible without finite second moment. 

Similarly, we denote $\tilde h$ the ascending renewal function of the  random walk $ (S{(n) } )_{n \geq 0}$ (i.e the descending renewal function  of  $ (-S{(n) } )_{n \geq 0}$). 

Both functions $h$ and   $\tilde h$  are  increasing, $h(0)=\tilde h(0)=1$ and  $h(x) = O(x), \tilde h(x) = O(x)$ as $x \to+ \infty$ (see \cite{AGKV05}, p. 648).

We have also to take into account the fact  that  the random walk $S$ does not always start from  the origin; hence,  for any $x \geq 0$, we set $\tau^S(x):= \inf\{ n \geq 1: x+S(n) < 0\}$;  it holds 
\[
[\tau^S(x)>n]= [L_n \geq  -x],
\]
where    $L_n=\min(S(1), \ldots, S(n))$.
The following  result is a combination of   Theorem 2 and  Proposition 11 in \cite{Doney12}  and   Theorem A in \cite{Kozlov}  
(see also Theorems II.6  and   II.7 in \cite{LePagePeigne1}).
\begin{lemma} \label{LemA}
	For any $x  \geq 0$, 
	\begin{enumerate}
		\item
		\begin{equation*}\label{LemA1}
		\pr[\tau^S(x)> n]  \sim  c_1 \frac{h(x)}{\sqrt{n }} \qquad as \quad n \to +\infty, 
		\end{equation*}
		where   $  \displaystyle  c_1=
		{\mathbb E[-S_{\ell_1}]\over \sigma\sqrt{2\pi}}$.
		Moreover,  
		there exists  a constant  $C_1 > 0$ such that for any $x \geq 0$ and $n \geq 1,$
		\begin{equation*}\label{LemA2}
		\pr[\tau^S(x)> n] \leq C_1\frac{h(x)}{\sqrt{n}}.
		\end{equation*}
		\item  For any $x, y \geq 0$, 
		\begin{equation*}\label{LemA'1}   
		\pr[\tau^S(x)> n, x+S(n) = y] \sim {1\over \sigma\sqrt{2\pi}} \frac{h(x)\tilde h(y)}{n^{{3/2}}} \qquad {\it as} \quad n \to +\infty, 
		\end{equation*}
		and there exists  a  constant   $ C_2>0$ such that,  for any any $x, y \geq 0$ and $n \geq 1,$
		\begin{equation*}\label{LemA'2}
		\pr[\tau^S(x)> n,  x+S(n)=y] \leq C_2\frac{h(x)\tilde h(y)}{n^{{3/2}}}.
		\end{equation*}
	\end{enumerate}
\end{lemma}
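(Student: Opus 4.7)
The plan is to prove both parts via the standard fluctuation-theoretic machinery: the Sparre--Andersen and Spitzer identities, the harmonicity of the renewal function $h$ for the walk killed on $(-\infty,0)$, and the local central limit theorem under the second moment hypothesis \textbf{A1}.

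For part (1), I start from the identity $\{\tau^S(x)>n\}=\{L_n\geq -x\}$ and exploit that $h$ is harmonic for the killed walk, so that
$$\hat\pr^x[A]:=\frac{1}{h(x)}\bE\bigl[h(x+S(n))\1_A\1_{\tau^S(x)>n}\bigr]$$
defines the law of the walk conditioned to stay non-negative. Its rescaled trajectories converge to the Brownian meander, and summing over the terminal value combined with a local CLT for $S(n)$ yields $\pr[\tau^S(x)>n]\sim c_1 h(x)/\sqrt n$; the sharp constant $\bE[-S(\ell_1)]/(\sigma\sqrt{2\pi})$ arises from renewal theory applied to the strict descending ladder epochs, together with the Gaussian density at the origin. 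The uniform bound $C_1 h(x)/\sqrt n$ can be obtained either from a maximal inequality for the $h$-transformed walk, or more directly by writing $\pr[\tau^S(x)>n]$ as a sum indexed by the ladder epochs and invoking the sharp tail $\pr[\ell_1>n]\preceq 1/\sqrt n$ together with $h(x)=\sum_l \pr[S(\ell_l)\geq -x]$.

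For part (2), I use time reversal. For each fixed $n$, the reversed walk $\tilde S(k):=S(n)-S(n-k)$ has the same law as $(S(k))$, and
\begin{align*}
\{\tau^S(x)>n,\ x+S(n)=y\}=\{\tilde\tau^{\tilde S}(y)>n,\ y+\tilde S(n)=x\}.
\end{align*}
Splitting the trajectory at time $\lfloor n/2\rfloor$ and using independence, I apply part (1) to the first half (contributing a factor $h(x)/\sqrt n$) and its dual analog to the reversed second half (contributing $\tilde h(y)/\sqrt n$); a local CLT for the middle displacement produces an additional $1/\sqrt n$ factor, giving the claimed order $h(x)\tilde h(y)/n^{3/2}$. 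The constant $1/(\sigma\sqrt{2\pi})$ is the Gaussian density at the origin. The uniform bound follows by replacing the asymptotic local CLT with its Stone-type uniform version.

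The main obstacle is controlling the uniform bounds when $x$ or $y$ is comparable to $\sqrt n$, the regime in which both the meander approximation and the naive local CLT degrade. Resolving this requires combining the linear growth $h(x)=O(x)$ (and similarly for $\tilde h$) with Gaussian tail estimates, and it is here that strong aperiodicity \textbf{A3} is essential to ensure a genuine local limit theorem on $\mathbb Z$ rather than on a proper sub-lattice. The additional moment assumption $\bE[(\xi_n^-)^3]<\infty$ plays no role in this lemma; it will be needed later to guarantee $\bE[|S(\ell_1)|^{1+\alpha}]<\infty$ when studying the spectrum of $\mathcal R$ on $\mathcal B_\alpha$.
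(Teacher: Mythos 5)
The paper does not prove this lemma at all: it is imported verbatim from the literature (Theorem 2 and Proposition 11 of Doney's 2012 paper, Theorem A of Kozlov, and Theorems II.6--II.7 of Le Page--Peign\'e), so there is no in-paper argument to compare yours against. Your outline reproduces the standard architecture of those references --- Sparre--Andersen/Spitzer identities and ladder-epoch renewal theory for the tail of $\tau^S$, the $h$-transform and meander convergence for the conditioned walk, time reversal and a splitting at $\lfloor n/2\rfloor$ for the local statement --- and your remarks about which hypotheses are actually used are accurate: only $\bE[\xi^2]<\infty$, centering and strong aperiodicity enter here, while $\bE[(\xi^-)^3]<\infty$ is irrelevant to this lemma.

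One step of your sketch would not survive being written out literally. In part (2), after splitting at $m=\lfloor n/2\rfloor$ and reversing time on the second half, you get
\begin{equation*}
\pr[\tau^S(x)>n,\ x+S(n)=y]=\sum_{z\geq 0}\pr[\tau^S(x)>m,\ x+S(m)=z]\cdot \pr[\tilde\tau(y)>n-m,\ y+\tilde S(n-m)=z],
\end{equation*}
which is a sum over the meeting point $z$ of two local probabilities of exactly the type you are trying to estimate --- so ``apply part (1) to the first half'' (an integrated tail bound) ``times a local CLT for the middle displacement'' does not close the argument and, taken at face value, is circular. What is actually needed is the conditional local limit theorem at diffusive scale: uniformly for $z$ of order $\sqrt n$,
\begin{equation*}
\pr[\tau^S(x)>m,\ x+S(m)=z]\ \approx\ \frac{c_1 h(x)}{\sqrt m}\cdot\frac{1}{\sigma\sqrt m}\,\varphi^+\!\Big(\frac{z}{\sigma\sqrt m}\Big),\qquad \varphi^+(u)=u e^{-u^2/2},
\end{equation*}
together with its analogue for the reversed walk; summing the product over $z$ then produces the $n^{-3/2}$ rate and the constant $1/(\sigma\sqrt{2\pi})$ as $\int_0^\infty \varphi^+\varphi^{+}$ times normalizations. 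Establishing that uniform meander-density estimate (and the companion uniform upper bound for $z$ of all sizes, where $h(x)=O(x)$ and Gaussian tails must be combined, as you correctly anticipate) is precisely the content of Doney's Proposition 11 and of Caravenna--Chaumont; it is the one genuinely nontrivial ingredient your outline treats as if it followed from part (1) plus an unconditional LLT. If you intend to give a self-contained proof rather than a citation, that is the piece you must supply.
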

These assertions  yield   a precise estimate of the probability $\pr[\tau^S(x) = n]$ itself, and not only the tail  of the distribution of $\tau^S$. As a direct consequence, the sequence of descending ladder epochs  $(\ell_l)_{l \geq 1}$ of the random walk $(S(n))_{n \geq 0} $ satisfies some renewal theorem  \cite{Doney12}.  Let us   state these two consequences which enlighten the next section where similar  statements concerning the successive epochs of reflections of the reflected random are proved.

\begin{corollary}  \label{cororenewal}
	For any $x  \geq 0$,   
	$$ \pr[\tau^S(x) = n] \sim  {c_1 \over 2 }   \ h(x)\  {1\over  n^{{3/2}}}  \qquad {\it as} \quad n \to +\infty, $$
	and  there exists   a constant   $\     C_3>0$ such that,  for any  $x \geq 0$ and $n\geq 1$,
	\[
	\pr[\tau^S(x) = n] \leq  C_3\frac{h({x})}{n^{{3/2}}}.
	\]
	Furthermore,
	\[
	\sum_{l=0}^{+\infty} \pr[\ell_l = n] \sim   \frac{1}{c_1 \pi}\ {1\over  \sqrt{n}}
	\qquad {\it as} \quad n \to +\infty. 
	\]
\end{corollary}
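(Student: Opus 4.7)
The plan is to reduce Corollary \ref{cororenewal} to the statements of Lemma \ref{LemA} through a one-step decomposition, with the last assertion requiring in addition a strong renewal theorem in the boundary regime $\alpha = 1/2$.

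For the pointwise asymptotic and the uniform bound on $\pr[\tau^S(x) = n]$, I would condition on the position at time $n-1$ and on the sign of the last step, writing
$$
\pr[\tau^S(x) = n] = \sum_{y \geq 0} \pr[\tau^S(x) > n-1,\ x + S(n-1) = y]\, \pr[\xi_1 < -y].
$$
Lemma \ref{LemA}(2) supplies both the pointwise asymptotic of each summand and the uniform bound $C_2\, h(x)\, \tilde h(y)/(n-1)^{3/2}$, and since $\tilde h(y) = O(y)$ one checks that
$$
\sum_{y \geq 0} \tilde h(y)\, \pr[\xi_1 < -y] = \bE\!\left[\sum_{y=0}^{\xi_1^- - 1} \tilde h(y)\right] = O\!\left(\bE[(\xi_1^-)^2]\right) < +\infty
$$
under assumption \textbf{A1}. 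Dominated convergence then gives both the uniform upper bound $\pr[\tau^S(x) = n] \leq C_3\, h(x)/n^{3/2}$ (handling small $n$ by enlarging the constant), which is the second assertion, and the pointwise limit $n^{3/2}\, \pr[\tau^S(x) = n] \to C(x)$ for some $C(x)$ linear in $h(x)$.

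To identify the constant as $C(x) = c_1 h(x)/2$, I would feed the uniform bound just obtained back into $\pr[\tau^S(x) > n] = \sum_{k > n} \pr[\tau^S(x) = k]$ and perform a Riemann-sum approximation (via $k = n u$) with dominated convergence; this yields
$$
\sqrt{n}\, \pr[\tau^S(x) > n] \longrightarrow C(x)\int_1^{+\infty} u^{-3/2}\, du = 2\, C(x),
$$
which, compared with Lemma \ref{LemA}(1), forces $C(x) = c_1 h(x)/2$. The first assertion follows.

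For the third assertion, $(\ell_l)_{l \geq 0}$ is a renewal process whose inter-arrival distribution $\mathcal{L}(\ell_1) = \mathcal{L}(\tau^S(0))$ satisfies $\pr[\ell_1 > n] \sim c_1/\sqrt{n}$ and the sharp local asymptotic $\pr[\ell_1 = n] \sim c_1/(2 n^{3/2})$ by the two previous assertions applied with $x = 0$ (recall $h(0) = 1$). By Karamata's Abelian theorem, $1 - \bE[s^{\ell_1}] \sim c_1\sqrt{\pi(1-s)}$ as $s \to 1^-$, so that $U(s) := \sum_{n \geq 0} u_n s^n = 1/(1 - \bE[s^{\ell_1}]) \sim 1/(c_1\sqrt{\pi(1-s)})$; a formal Tauberian transfer then suggests $u_n \sim 1/(c_1 \pi \sqrt{n})$. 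The main technical obstacle is to make this transfer rigorous: at the boundary index $\alpha = 1/2$ the Abelian-to-Tauberian deduction is \emph{not} automatic and counterexamples to naive inversion exist without further hypotheses. One therefore invokes the strong renewal theorem of \cite{Doney12} for the ladder epochs, whose additional regularity condition is exactly the pointwise local asymptotic of $\pr[\ell_1 = n]$ established in the first step, and which delivers precisely $u_n \sim 1/(c_1 \pi \sqrt{n})$.
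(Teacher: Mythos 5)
Your proposal is correct and follows what the paper intends: the paper presents this corollary as a direct consequence of Lemma \ref{LemA} together with the renewal results of Doney, and your one-step decomposition over the position at time $n-1$, the identification of the constant by summing the local estimate and matching against the tail asymptotic of Lemma \ref{LemA}(1), and the appeal to the strong renewal theorem (needed precisely because the index $1/2$ is the boundary case where naive Tauberian inversion fails) supply exactly the omitted details. The moment condition you use, $\bE[(\xi_1^-)^2]<+\infty$, is available under \textbf{A1}, so the domination step is justified.
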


\subsection{Conditional    limit theorems}

The following statement  corresponds to Lemma 2.3 in \cite{AGKV05}; the symbol $``\Rightarrow"$ means ``weak convergence".
%
\begin{lemma} \label{LemC2}Assume $\mathbb E(\xi_i^2)<+\infty$ and $\mathbb E(\xi_i)=0$. Then, for any $x \geq 0$, 
	$$\mathcal{L}\left(\Big(\frac{S([nt])}{\sigma\sqrt{n}}\Big)_{0\leq t \leq 1}| \min\{S(1), \ldots, S(n)\}\geq -x\right) \Rightarrow \mathcal{L}(L^+) \quad \text{as } n \to +\infty,$$
	where $L^+$ is the Brownian meander. 
	
	In particular,  for  any 
	bounded and  Lipschitz continuous function $\phi: \mathbb R \to \mathbb R$,
	$$\lim_{n\to +\infty} \bE\left[ \phi \left( \frac{x+ S(n)}{\sigma\sqrt{n}}\right)\Big| \tau^S(x) > n\right] = \int_0^{+\infty} \phi(z)ze^{-z^2/2}dz.$$
\end{lemma}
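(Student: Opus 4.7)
The plan is to take the functional invariance principle (first assertion) as a classical result, recorded as Lemma~2.3 of \cite{AGKV05}, and to derive the one-point statement from it by a continuous mapping argument together with the identification of the one-dimensional marginal of the Brownian meander at time $1$. If one wished to prove the functional convergence from scratch, the argument would rest on two ingredients: (i) tightness in $C([0,1])$ of the rescaled paths under the conditional law, obtained from Donsker-type moment bounds divided by the denominator estimate $\pr[\tau^S(x)>n]\sim c_1 h(x)/\sqrt{n}$ of Lemma~\ref{LemA}(1); and (ii) convergence of the finite-dimensional marginals, carried out by a Markov decomposition at intermediate times $\lfloor nt_j\rfloor$ using the joint local estimate of Lemma~\ref{LemA}(2) to produce the correct transition structure of $L^+$.

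Granting the functional convergence, I would derive the particular case as follows. The evaluation $f\mapsto f(1)$ is continuous on $C([0,1])$, so for $\phi$ bounded continuous the functional $\Phi: f\mapsto \phi(f(1))$ is bounded continuous; the continuous mapping theorem gives
\[
\lim_{n\to+\infty} \bE\!\left[\phi\!\left(\frac{S(n)}{\sigma\sqrt{n}}\right) \,\Big|\, \tau^S(x)>n\right] \;=\; \bE[\phi(L^+(1))].
\]
Replacing $S(n)$ by $x+S(n)$ introduces an error of order $x/(\sigma\sqrt{n})\to 0$, absorbed by the Lipschitz hypothesis on $\phi$ (equivalently, by Slutsky). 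It then remains to identify $\bE[\phi(L^+(1))]$ with $\int_0^{+\infty}\phi(z)\, z e^{-z^2/2}\,\rd z$, i.e.\ to use the classical fact that the Brownian meander at time $1$ has the Rayleigh density $z e^{-z^2/2}\mathbf{1}_{z>0}$. As a consistency check, one recovers this directly from the estimates of Lemma~\ref{LemA}: dividing (2) by (1) yields
\[
\frac{\pr[\tau^S(x)>n,\, x+S(n)=y]}{\pr[\tau^S(x)>n]} \;\sim\; \frac{\tilde h(y)}{\bE[-S(\ell_1)]\, n},
\]
and substituting $y=\lfloor \sigma\sqrt{n}\,z\rfloor$ together with the renewal asymptotic $\tilde h(y)\sim y/\bE[S(\tilde\ell_1)]$ and the Wiener--Hopf relation linking $\bE[-S(\ell_1)]\bE[S(\tilde\ell_1)]$ to $\sigma^2$ reproduces the Rayleigh density after a Riemann-sum identification.

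The main obstacle, if one attempted to bypass the reference \cite{AGKV05}, lies in step (ii) of the functional proof: the joint convergence of several intermediate positions, conditioned on the walk never having dipped below $-x$, demands iterating Lemma~\ref{LemA}(2) with uniform-in-starting-point remainder terms, which is exactly the delicate estimate underpinning the Brownian-meander invariance principle. Since here we only need the marginal at $t=1$, the continuous mapping theorem applied to $\Phi$, combined with the explicit Rayleigh density of $L^+(1)$, circumvents this difficulty entirely and yields the stated limit.
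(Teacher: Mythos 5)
Your proposal is correct and follows essentially the same route as the paper, which likewise takes the functional convergence as a citation to Lemma~2.3 of \cite{AGKV05} and treats the one-point statement as an immediate consequence via the marginal Rayleigh law of the meander at time~$1$. Your added details (continuous mapping at $t=1$, the Slutsky absorption of the $x/(\sigma\sqrt{n})$ shift, and the consistency check via Lemma~\ref{LemA}) are sound but go beyond what the paper writes out.
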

This Lemma is useful in the sequel to control the fluctuations of the excursions of the process $(X(n))_{n \geq 0}$ between two  
successive  times of reflection. In order to control also the higher  dimensional  distributions of these excursions, we need    some
invariance principle for random walk bridges conditioned to stay positive. The following result corresponds in our setting to Corollary 2.5   in \cite{CC}.

\begin{lemma} \label{lemCC} 
	For any   bounded, Lipschitz continuous function $\phi: \mathbb R \to \mathbb R$, any $x, y \geq 0$, and any $t > s > 0$, 
	\begin{align*}
	\lim_{n\to +\infty} &\bE\left[ \phi \left( \frac{x+ S([ns])}{\sigma\sqrt{n}}\right)\Big| \tau^S(x) > [nt], x+S([nt])=y\right] \\
	&= \int_0^{+\infty} 2\phi(u\sqrt{s}) \exp \left( -\frac{u^2}{2\frac st \frac{t-s}{t}}\right) \frac{u^2}{\sqrt{2\pi \frac{s^3}{t^3}\frac{(t-s)^3}{t^3}}}du.
	\end{align*}
\end{lemma}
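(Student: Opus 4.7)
The plan is to deduce this statement directly from the invariance principle of Caravenna and Chaumont \cite{CC}, specifically their Corollary~2.5, applied in our finite-variance centred setting. Set $N:=[nt]$; the conditioning event $\{\tau^S(x)>[nt],\ x+S([nt])=y\}$ is exactly of the form treated in \cite{CC}. Their result asserts that, under this conditional law, the rescaled bridge process
\[
\Bigl(\frac{x+S([Nu])}{\sigma\sqrt{N}}\Bigr)_{0\leq u\leq 1}
\]
converges weakly in $C([0,1])$, as $N\to+\infty$, to the normalized Brownian excursion $(e(u))_{0\leq u\leq 1}$. Our assumptions \textbf{A1} (finite second moment), \textbf{A2} (centering) and \textbf{A3} (strong aperiodicity, needed for the bridge local limit theorem hidden in their argument) place us in the scope of their theorem; the fixed values of $x$ and $y$ are uniformly harmless since $x/\sigma\sqrt{n}\to 0$ and $y/\sigma\sqrt{n}\to 0$, so the limiting bridge is pinned at the origin at both ends.

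Convergence of the one-dimensional marginal at normalized time $u=s/t\in(0,1)$, together with the scaling identity $\sigma\sqrt{N}\sim \sqrt{t}\,\sigma\sqrt{n}$, then yields, under the conditioning,
\[
\frac{x+S([ns])}{\sigma\sqrt{n}}\ \xrightarrow{d}\ \sqrt{t}\;e(s/t).
\]
Because $\phi$ is bounded and Lipschitz continuous, weak convergence upgrades to convergence of expectations. The displayed integral formula is then obtained by expressing $\mathbb{E}[\phi(\sqrt{t}\,e(s/t))]$ through the classical marginal density
\[
p_e(r,v)=\sqrt{\tfrac{2}{\pi}}\,\frac{v^{2}}{(r(1-r))^{3/2}}\exp\!\Bigl(-\frac{v^{2}}{2r(1-r)}\Bigr), \qquad r=s/t,
\]
and performing the change of variable that brings the resulting integral into the form stated on the right-hand side of the lemma.

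The main conceptual obstacle, if one wished to prove the statement from scratch rather than invoking \cite{CC}, would be the two-sided Markov decomposition of the conditional law: one must control jointly the paths on $[0,[ns]]$ and on $[[ns],[nt]]$ given survival on the whole interval $[0,[nt]]$ with terminal value $y$. This is exactly what \cite{CC} carry out, by combining a conditional local CLT with the two-sided analogue of Lemma~\ref{LemA}(2) to glue the two half-bridges together; citing their result spares us from reassembling these estimates here.
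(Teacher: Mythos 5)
Your approach is exactly the paper's: the text offers no proof of Lemma~\ref{lemCC} beyond the remark that it ``corresponds in our setting to Corollary 2.5 in \cite{CC}'', and you invoke the same corollary, correctly identifying the limit of the rescaled bridge as the normalized Brownian excursion and reading off its one-dimensional marginal. One caveat: your (correct) computation gives $\mathbb{E}[\phi(\sqrt{t}\,e(s/t))]$, whereas the displayed right-hand side of the lemma, with $\phi(u\sqrt{s})$ under the integral, equals $\mathbb{E}[\phi(\sqrt{s}\,e(s/t))]$; the final change of variable you allude to therefore does not land on the formula as printed. This is a typo in the statement rather than a gap in your argument --- the application in Section 4.2.1, where the total bridge length is $s_2-s_1$ and the test function is evaluated at $u\sqrt{s_2-s_1}$, confirms that the intended argument is $\phi(u\sqrt{t})$.
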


\section{On the sub-process of reflections} 

We present briefly some results from \cite{EP} and \cite{PW}. The  reflected times $ r_n, n \geq 0,$ of the random walk $(X(n))_{n \geq 0}$ are defined by: for any $x \geq 0$, 
$$r_0=r_0(x) = 0\quad \text{ and} \quad  r_{n+1}  = \inf \{m> r_n  \mid   X(r_n) + \xi_{r_n+1}+\cdots+ \xi_m <  0\}.$$
Notice that these random variables are  $\mathbb N_0\cup\{+\infty\}$-valued  stopping times with respect to the filtration $(\mathcal G_n)_{n \geq 0}$.

When $\mathbb E[\vert \xi_n\vert]<+\infty$ and $\mathbb E[ \xi_n ]=0$, the random walk $(S(n))_{n \geq 0}$  is oscillating, hence the   $r_n, n \geq 0$, are all finite $\mathbb P$-a.s.
and $ S(n)/n $  converges $\mathbb P$-a.s. towards $0$. The strong law of large numbers is still true for the reflected random walk $(X^x(n))_{n \geq 0}$ on $\mathbb N_0$ but  does not derive directly.   
\begin{lemma} \label{SLLN}
	If $\ \mathbb E[\vert \xi_n\vert]<+\infty$ and $\mathbb E[ \xi_n ]=0$, then, for any $x \in \mathbb N_0$, 
	\[
	\lim_{n \to +\infty}{X^x(n)\over n} =0 \quad \mathbb P\text{-a.s.}
	\]
\end{lemma}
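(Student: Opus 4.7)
The plan is to reduce the strong law for $X^x$ to the classical SLLN for $S$ by exploiting the structure of the chain between and at consecutive reflection epochs. Set $N(n):=\#\{j\geq 1 : r_j\leq n\}$ for the number of reflections by time $n$, with the convention $r_0:=0$. On the random interval $[r_{N(n)},n]$ no reflection occurs, hence the chain evolves as a translate of the random walk, so that
\[
X(n)=X(r_{N(n)})+\bigl(S(n)-S(r_{N(n)})\bigr).
\]
Dividing by $n$ and using the triangle inequality, it is enough to prove $\pr$-a.s. convergence to $0$ of the three terms $X(r_{N(n)})/n$, $S(n)/n$ and $S(r_{N(n)})/n$.

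The only new ingredient is the first term. The key observation is that at any reflection epoch $r=r_j$ the very definition of $r$ together with the non-negativity of $X(r-1)$ gives
\[
X(r)=-X(r-1)-\xi_r\leq \xi_r^-,
\]
so the value immediately after a reflection is bounded by a single negative increment. Consequently $X(r_{N(n)})\leq \max_{1\leq k\leq n}\xi_k^-$, and since $\bE[\xi_1^-]\leq \bE[|\xi_1|]<+\infty$ Borel--Cantelli yields $\xi_n^-/n\to 0$ $\pr$-a.s.; the standard maximum argument (if $\xi_k^-\leq \varepsilon k$ for all $k>K$ then $\max_{k\leq n}\xi_k^-\leq \max_{k\leq K}\xi_k^-+\varepsilon n$) upgrades this to $\max_{k\leq n}\xi_k^-/n\to 0$ $\pr$-a.s. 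Hence $X(r_{N(n)})/n\to 0$ $\pr$-a.s.

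The remaining two terms are handled by the classical SLLN: $S(n)/n\to 0$ $\pr$-a.s.; and since $r_{N(n)}\leq n$, one has $|S(r_{N(n)})|\leq \max_{k\leq n}|S(k)|$, which by the same Ces\`aro-type argument applied to $|S(k)|/k\to 0$ is $o(n)$ $\pr$-a.s. On the event $\{N(n)=0\}$ the identity $X(n)=x+S(n)$ gives the conclusion directly; this case is anyway vacuous for large $n$ since $r_1<+\infty$ $\pr$-a.s. under the hypothesis $\bE[\xi_1]=0$, the walk $(S(n))$ being oscillating.

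The main (and rather mild) obstacle is the pointwise identity $X(r)=-X(r-1)-\xi_r$ at a reflection epoch, which rests on the fact that between two consecutive reflections the chain coincides with a translate of the random walk, so that $X(r-1)+\xi_r$ is genuinely the negative quantity whose absolute value produces $X(r)$. Once that observation is in hand, the proof reduces to two applications of the classical SLLN plus the elementary fact that the running maximum of an integrable i.i.d. sequence is $o(n)$.
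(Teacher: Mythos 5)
Your proof is correct and follows essentially the same route as the paper's: decompose $X(n)=X(r_{k})+S(n)-S(r_{k})$ at the last reflection epoch $r_k\leq n$, bound $X(r_k)$ by a single increment (the paper uses $\max_{j\leq n}|\xi_j|$ where you use $\max_{j\leq n}\xi_j^-$), and dispose of the two random-walk terms by the classical SLLN. The only cosmetic difference is in the third term, where the paper writes $|S(r_{k})|/n\leq |S(r_{k})/r_{k}|$ instead of your running-maximum bound; both are valid.
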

\begin{proof}
	For any $n\geq 1$, there exists a   (random) integer  $k_n\geq 1$ such that  $r_{k_n}\leq n<r_{k_n+1}$. It holds \[
	X^x(n)= X^x(r_{k_n})+\left( \xi_{r_{k_n}+1}+\cdots+ \xi_n\right)= X^x(r_{k_n})+ S(n)-S(r_{k_n}),
	\] so that
	$$
	0\leq {X^x(n)\over n}= {X^x(r_{k_n})\over n}+{S(n)\over n}-{S(r_{k_n})\over n}\leq 
	{\max\{\vert \xi_1\vert, \ldots,  \vert \xi_n\vert\}\over n}+{S(n)\over n}-{S(r_{k_n})\over n}.
	$$
	
	The first term on the right hand side converges $\mathbb P$-a.s.  towards $0$  since $ \mathbb E[\vert \xi_n\vert]<+\infty$.
	
	By the strong law of large number, the second term tends $\mathbb P$-a.s.  to $0$. 
	
	At last, the same property holds   for  the last term,  since 
	$\displaystyle \Bigl\vert {S(r_{k_n})\over n}\Bigr\vert= \Bigl\vert {S(r_{k_n})\over r_{k_n}}\Bigr\vert\times   { r_{k_n}\over n}  \leq \Bigl\vert {S(r_{k_n})\over r_{k_n}}\Bigr\vert$.
\end{proof}
It follows from Lemma 2.3 in \cite{PW06}  that the sub-process of reflections  $(X(r_k))_{k\geq 0}$ is a Markov chain on $\mathbb{N}_0$ with transition probability $\mR$ given by: for all $x,y \in \mathbb N_0$, 
\begin{equation}\label{matrixR}
\mR(x,y) = \begin{cases} 0 & \text{ if } y = 0\\
\sum_{w= 0}^x U^*(-w)\mu^*(w-x-y) & \text{ if } y \geq 1,  \end{cases}  	\end{equation}
where  $\mu^*$ is the distribution of $S(\ell_1)$  and $\displaystyle U^{*} = \sum_{n=0}^{+\infty} (\mu^*)^{\star n}$ denotes  its potential. 

Set $C:=\sup\{y\geq 1: \mu(-y)>0\}$. The support of $\mu^*$ equals $\mathbb Z^-=\mathbb Z\cap (-\infty, 0)$ when $C=+\infty$, otherwise it is $\{-C, \ldots, -1\}$; furthermore, $U^*(-w)>0$ for any $w\geq 0$.  Then, $\mathcal  R(x, y)>0$  if and only if  $y \in \mathbb S_r$, where  $\mathbb S_r=\mathbb N_0\setminus\{0\} $  when $C=+\infty$ and   $\mathbb S_r=\{1, \dots, C\}$ otherwise. Consequently, the set $\mathbb S_r$ is the unique  irreducible and ergodic class of  the Markov chain   $(X(r_k))_{k\geq 0}$ and this chain is aperiodic on  $\mathbb S_r$.

The measure $\nu$ on $\mathbb N_0$   defined by 
\begin{align*}
\nu(x) = \sum_{y=1}^{+\infty} \Big( \frac 12 \mu^*(-x) + \mu^*\big( (-x-y, -x)\big) + \frac 12 \mu^*(-x-y)\Big)\mu^*(-y),
\end{align*}
is, up to a multiplicative constant,  the unique stationary  measure for $(X(r_k))_{k\geq 0}$; its support equals $\mathbb S_r$ (see Theorem 3.6 \cite{PW06}).   

\noindent  Notice that this measure $\nu$ is finite  when $\mathbb E[\xi_n]=0$ and  $\bE\big[|S(\ell_1)|^{1/2}\big]<+\infty$ (and in particular when  $\mathbb E[\xi_n]=0$ and $\mathbb E[\vert \xi_n\vert^{3/2}]<+\infty$ \cite{PW}). {\bf In this case, we normalize $\nu$ it in such a way it is a probability measure.}

\subsection{On the spectrum of the transition probabilities matrix $\mathcal R$}

Let us  recall some spectral properties of the matrix $\mathcal R=(\mathcal R(x, y))_{x, y \in \mathbb N_0}$. By Property 2.3 in  \cite{EP},  the matrix $\mathcal R$ is quasi-compact  on the space $L^{\infty}(\mathbb N_0)$   of bounded functions on $\mathbb N_0$, with  $1$  as the unique  (and simple) dominant eigenvalue; in particular, the rest of the spectrum of $\mathcal R$ is included in a disc with radius $<1$. 

It is of interest in the next section to let $\mathcal R$ act  on a bigger space  than $L^{\infty}(\mathbb N_0)$. 
For instance, following  \cite{EP}, we may  fix $K>1$ and  consider the Banach space 
\[L_K(\mathbb N_0):= \{\phi: \mathbb N_0\to \mathbb C :  \Vert \phi\Vert_K:=\sup_{x \geq 0} \vert \phi(x)\vert \slash  K^x<+\infty\}
\]
endowed with the norm $\Vert \cdot \Vert_K$.   By Property 2.3 in  \cite{EP},  if $\displaystyle \sum_{x \geq 0} K^x \mu(x)<+\infty$ then  $\mathcal R$ acts as a compact operator on  $L_K(\mathbb N_0)$.

In this article, we only assume that $\mu $ has a  finite moment of order $2$ and its negative part has moment of order 3. Consequently,  we consider a smaller Banach space  $\mathcal B_\alpha$ adapted to these hypotheses
and defined by: for $\alpha >0$ fixed, 
\[\mathcal B_\alpha:= \Big\{\phi: \mathbb N_0\to \mathbb C :  \vert \phi\vert_\alpha:=\sup_{x \geq 0} \frac{\vert \phi(x)\vert}{  1+x^\alpha} <+\infty \Big \}.
\]
Endowed with the norm $\vert \cdot \vert_\alpha$, the space   $\mathcal B_\alpha$ is a Banach space on $\mathbb C$.  \begin{proposition}\label{propspectral}
	Fix $\alpha >0$ and 
	assume   $\mathbb E[\xi_n^2]+\mathbb E[ (\xi_n^-)^{2+\alpha}]<+\infty $ and $\mathbb E[\xi_n]=0$.
	Then, the operator $\mathcal R$ acts on  $ \mathcal B_\alpha$ and   $\mathcal R(\mathcal B_\alpha)\subset L^\infty(\mathbb N_0)$. 
	Furthermore, 
	\begin{enumerate}
		\item $\mathcal R$ is compact on $\mathcal B_\alpha$ with spectral radius $1$; 
		\item $1$ is the unique eigenvalue of $\mathcal R$ with modulus $1$, it is simple with corresponding eigenspace $\mathbb C {\bf 1}$;
		\item the rest of the spectrum of  $\mathcal R$ on $\mathcal B_\alpha$ is included in a disc with radius $<1$.
	\end{enumerate}
	
\end{proposition}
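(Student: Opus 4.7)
The strategy is to combine a uniform moment estimate on $X(r_1)$ with a finite-rank truncation argument, then read off the spectral statements by transporting the $L^\infty$-quasi-compactness of \cite{EP} to $\mathcal B_\alpha$. The starting point is the probabilistic interpretation $(\mathcal R\phi)(x)=\mathbb E_x[\phi(X(r_1))]$: unwinding the reflection, $X(r_1)=-(x+S(\tau^S(x)))$ is the overshoot below $0$ of the walk $(x+S(n))_{n\geq 0}$; equivalently, $X(r_1)=W_L-x$, where $W_l:=-S(\ell_l)$ is a renewal walk with positive i.i.d.\ increments of distribution $-\mu^*$ and $L:=\inf\{l\geq 1:W_l>x\}$.

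The crucial step is the uniform overshoot bound
\[
C_\alpha:=\sup_{x\geq 0}\mathbb E_x\bigl[X(r_1)^\alpha\bigr]=\sup_{x\geq 0}\sum_{y\geq 1} y^\alpha\,\mathcal R(x,y)<+\infty.
\]
By the classical ladder-height moment theorems (see e.g.\ \cite{Doney12}), assumption A2 together with $\mathbb E[(\xi_n^-)^{2+\alpha}]<+\infty$ yields $\mathbb E[|S(\ell_1)|^{1+\alpha}]<+\infty$, and a Lorden-type inequality for renewal overshoots then gives $C_\alpha<+\infty$; alternatively, the same bound can be read off the explicit formula for $\mathcal R(x,y)$ by the substitution $v=x+y-w$ together with the standard estimate $\sum_{w=0}^x U^*(-w)=O(1+x)$. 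Hence, for every $\phi\in\mathcal B_\alpha$,
\[
\|\mathcal R\phi\|_\infty\leq|\phi|_\alpha(1+C_\alpha),
\]
so $\mathcal R\phi\in L^\infty(\mathbb N_0)\subset\mathcal B_\alpha$ and $\mathcal R$ maps $\mathcal B_\alpha$ continuously into itself through $L^\infty(\mathbb N_0)$. Since $r_1<+\infty$ $\mathbb P$-a.s.\ we have $\mathcal R\mathbf 1=\mathbf 1$; combining this with the Markov contraction $\|\mathcal R\|_{L^\infty\to L^\infty}=1$ and the factorization $\mathcal R^n=\mathcal R^{n-1}\circ\mathcal R$ for $n\geq 1$, one obtains $|\mathcal R^n\phi|_\alpha\leq(1+C_\alpha)|\phi|_\alpha$, so the spectral radius of $\mathcal R$ on $\mathcal B_\alpha$ equals $1$.

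To prove compactness, I introduce the truncations $(\mathcal R_M\phi)(x):=(\mathcal R\phi)(x)\mathbf 1_{\{x\leq M\}}$, whose images lie in the $(M+1)$-dimensional subspace of functions supported on $\{0,\dots,M\}$; each $\mathcal R_M$ therefore has finite rank. A one-line estimate gives
\[
|(\mathcal R-\mathcal R_M)\phi|_\alpha=\sup_{x>M}\frac{|(\mathcal R\phi)(x)|}{1+x^\alpha}\leq\frac{(1+C_\alpha)|\phi|_\alpha}{1+M^\alpha},
\]
and the right-hand side tends to $0$ as $M\to+\infty$. Thus $\mathcal R$ is a uniform limit of finite-rank operators, hence compact on $\mathcal B_\alpha$, which proves (1).

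Finally, suppose $\mathcal R\phi=\lambda\phi$ with $\phi\in\mathcal B_\alpha\setminus\{0\}$ and $|\lambda|=1$. Then $\phi=\lambda^{-1}\mathcal R\phi\in L^\infty(\mathbb N_0)$, so Property 2.3 of \cite{EP}, according to which $1$ is the unique, simple eigenvalue of modulus $1$ of $\mathcal R$ on $L^\infty(\mathbb N_0)$ with eigenspace $\mathbb C\mathbf 1$, forces $\lambda=1$ and $\phi\in\mathbb C\mathbf 1$. This proves (2), and combined with compactness it confines the remainder of the spectrum inside a disc of radius strictly less than $1$, yielding (3). The main obstacle is the uniform moment bound on the overshoot $X(r_1)$: it is precisely this estimate that dictates the unusual integrability hypothesis $\mathbb E[(\xi_n^-)^{2+\alpha}]<+\infty$, while the positive tail of $\xi_n$ enters only through the (automatic) second-moment condition.
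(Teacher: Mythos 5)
Your proof is correct and its skeleton coincides with the paper's: the heart of both arguments is the bound $\Vert\mathcal R\phi\Vert_\infty\leq C\vert\phi\vert_\alpha$ obtained from $\sup_x\sum_y(1+y^\alpha)\mathcal R(x,y)<+\infty$, which via Chow--Lai reduces to $\mathbb E[|S(\ell_1)|^{1+\alpha}]<+\infty$ and hence to $\mathbb E[(\xi_n^-)^{2+\alpha}]<+\infty$; compactness, the spectral radius computation through power-boundedness, and the deduction of (3) from (1)--(2) are then identical in substance. Two local differences are worth noting. First, for the uniform overshoot moment you invoke a Lorden-type inequality; the paper gets the same bound more cheaply by using $U^*(-w)\leq 1$ pointwise in the explicit formula \eqref{matrixR}, so that $\sum_{w=0}^x U^*(-w)\mu^*(w-x-y)\leq\mu^*((-\infty,-y))$ — no renewal-theoretic machinery is needed (your own ``alternative'' route via $\sum_{w\leq x}U^*(-w)=O(1+x)$ is the wrong normalization and would give a bound growing in $x$; the pointwise bound is the one to use). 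Second, and this is the genuine divergence: for the peripheral spectrum you observe that any eigenfunction for $|\lambda|=1$ lies in $L^\infty(\mathbb N_0)$ and then delegate to Property 2.3 of \cite{EP}, whereas the paper reproves the statement from scratch with a superharmonicity-plus-convexity argument on the irreducible class $\mathbb S_r$. Your reduction is legitimate and shorter (the paper itself quotes the $L^\infty$ result just before the proposition), and it buys economy at the price of making the proposition non-self-contained; if you want the full strength of ``simple'' (algebraic multiplicity one) you should add the one-line remark that a generalized eigenvector $\phi$ with $\mathcal R\phi=\phi+c\mathbf 1$ is likewise bounded, so the Jordan block structure also transfers from $L^\infty(\mathbb N_0)$.
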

Let $\Pi$ be the projection from $\mathcal B_\alpha$ onto the eigenspace $\mathbb C {\bf 1}$ corresponding to this spectral decomposition, i.e. such that $\Pi \mathcal R= \mathcal R \Pi= \Pi$. 
In  other words, there exists a bounded operator $\mathcal Q$  on $\mathcal B_\alpha$  with spectral radius $<1$ such that  $\mathcal R$ may be decomposed as follows:
\begin{equation*}\label{decomposition}
\mathcal R= \Pi + \mathcal Q, \quad  \Pi  \mathcal Q=  \mathcal Q\Pi =0 \quad \text{with} \quad \Pi(\cdot)= \nu(\cdot) {\bf 1}.
\end{equation*}

In the next section, we require that $\mathcal B_\alpha$ does contain the  descending  and ascending renewal functions $h$  and $\tilde h$ of the random walk $S$. This imposes in particular that  $\alpha$ is greater or equal to $1$. 

\begin{proof}
	(1) By (\ref{matrixR}), for any $\phi \in \mathcal B_\alpha$ and $x \geq 0$,
	\[
	\mathcal R\phi(x)=\sum_{y\geq 1}\sum_{w= 0}^x U^*(-w)\mu^*(w-x-y)\phi(y)
	\]
	with 
	$\displaystyle  U^*(-w) = \sum_{n=0}^{+\infty} \pr[S(l_n) = -w] = \pr\Bigl[\cup_{n\geq 0} [S(l_n) = -w]\Bigr]\leq 1.$  
	Therefore, 
	\begin{align*}
	\vert \mathcal R\phi(x)\vert&\leq  
	\sum_{y\geq 1}\sum_{w= 0}^x \mu^*(w-x-y)\vert \phi(y)\vert 
	\\
	&
	\leq \sum_{y\geq 1}\mu^*((-\infty, -y)) \vert \phi(y)\vert 
	\\
	&\leq \left(\sum_{y\geq 1}(1+y^\alpha) \mu^*((-\infty, -y))\right)\vert \phi\vert_\alpha. 
	\end{align*}
	By Theorem 1 in \cite{ChowLai}, the condition $\mathbb E[ (\xi_n^-)^{2+\alpha}]<+\infty$ implies  $\mathbb E \left[|S(\ell_1)|^{1+\alpha}\right]<+\infty$; hence, 
	$$\displaystyle  \sum_{y\geq 1}(1+y^\alpha) \mu^*((-\infty, -y))\leq 
	\bE \left[|S(\ell_1)|\right]+ \bE \left[|S(\ell_1)|^{1+\alpha}\right] <+\infty.$$
	Consequently,  
	\begin{equation}\label{normR}
	\vert \mathcal R\phi\vert_\alpha\leq \vert \mathcal R\phi\vert_\infty\leq \Bigl(\bE \left[|S(\ell_1)|\right]+ \bE \left[|S(\ell_1)|^{1+\alpha}\right] \Bigr) \vert \phi\vert_\alpha
	\end{equation}
	which proves that $\mathcal R$ acts on $\mathcal B_\alpha$ when $\mathbb E[ (\xi_n^-)^{2+\alpha}]<+\infty$.  More precisely, the operator $\mathcal R$ is  bounded from $\mathcal B_\alpha$ into $L^{\infty}(\mathbb N_0)$ and  since the canonical injection $L^{\infty}(\mathbb N_0) \hookrightarrow\mathcal B_\alpha$ is compact, the operator $\mathcal R$ is   compact on $\mathcal B_\alpha$.
	
	Let us now check that  $\mathcal R$ has spectral radius $\rho_\alpha=1$ on $\mathcal B_\alpha$.  On the one hand, the equality $\mathcal R {\bf 1} = {\bf 1}$, with ${\bf 1} \in \mathcal B_\alpha$, yields $\rho_\alpha \geq 1$. On the other hands, $\mathcal R$ is a power bounded operator on $\mathcal B_\alpha$, which readily implies $\rho_\alpha \leq 1$; indeed, 
	for any $n \geq 1$, 
	\[
	\vert \mathcal R^n\phi(x)\vert  \leq \sum_{z =0}^{+\infty} \mathcal R^{n-1}(x, z)\vert \mathcal R \phi(z)\vert
	\leq
	\vert \mathcal R\phi\vert_\infty\sum_{z =0}^{+\infty} \mathcal R^{n-1}(x, z)=\vert \mathcal R\phi\vert_\infty,
	\]
	which yields, combining with (\ref{normR}), 
	\[
	\vert \mathcal R^n\phi\vert_\alpha \leq \vert \mathcal R^n\phi\vert_\infty\le \Bigl(\bE \left[|S(\ell_1)|\right]+ \bE \left[|S(\ell_1)|^{1+\alpha}\right] \Bigr) \vert \phi\vert_\alpha.
	\]
	Consequently, denoting $\Vert\mathcal R^n\Vert_\alpha$   the norm of  $\mathcal R^n$ on $\mathcal B_\alpha$, it holds
	\[
	\sup_{n \geq 0} \Vert \mathcal R^n\Vert_\alpha \leq\Bigl(\bE \left[|S(\ell_1)|\right]+ \bE \left[|S(\ell_1)|^{1+\alpha}\right] \Bigr)  <+\infty.
	\]
	This achieves the proof of assertion 1. 
	
	(2)  Let us control the peripherical spectrum of $\mathcal R$ in $\mathcal B_\alpha$. Let $\theta \in \mathbb R$ and $\phi \in \mathcal B_\alpha$ such that  $\mathcal R\phi=e^{i\theta} \phi.$  
	
	By (\ref{normR}), the function  $\mathcal R\phi$  is bounded, so is  $\phi$. 
	Furthermore, the operator $\mathcal R$ being positive, it holds $\vert \phi\vert \leq \mathcal R \vert \phi\vert$. 
	Consequently, the function  $\vert \phi\vert_\infty-\vert \phi\vert$    is super-harmonic and non-negative, hence constant since the Markov chain $(X(r_n))_{n \geq 0}$ is irreducible and recurrent on this set. 
	
	Without loss of generality, we may assume $\vert \phi\vert =1$ on $\mathbb S_r$, i.e  $\phi (x)= e^{i\varphi(x) }$ for  any $x \in \mathbb S_r$, with $\varphi: \mathbb S_r\to \mathbb R$. Equality  $\mathcal R\phi=e^{i\theta} \phi$ may be rewritten as: for any $x \in \mathbb S_r,$
	\[
	\sum_{y \in \mathbb S_r} e^{i(\varphi(y)-\varphi(x))}\mathcal R(x, y)=e^{i \theta}.
	\]
	Recall that $\mathcal R(x, y)>0$ for any $x, y \in \mathbb S_r$;  thus, by convexity, $e^{i(\varphi(y)-\varphi(x))}=e^{i\theta} $ for any $x, y \in \mathbb S_r$. Thus, $e^{i\theta}=1$ and  the function $\phi$ is harmonic on $\mathbb S_r$,   hence constant.
	Eventually, the function $\phi$ is constant on $\mathbb N_0$: this is the consequence of  
	equality $\mathcal R\phi(x)=e^{i\theta} \phi(x)=\phi(x)$,   valid for any $x \in \mathbb N_0$,  combined with the facts  that $\mathcal R(x, y) >0$ if and only if $y \in\mathbb S_r$ and that $\phi$ is constant on $\mathbb S_r$.

	(3) Assertion 3 is a consequence of assertion 2 and the compactness of $\mathcal R$ on $\mathcal B_\alpha$.
	
\end{proof}
\subsection{ A Renewal limit theorem for the times of reflections}
In this section, we prove the analogous of Corollary \ref{cororenewal} for the process $(r_n)_{n \geq 0}$.
Let us introduce some notations and conventions. 

From  now on,       we focus on the process $(X(n))_{n \geq 0}$ and denote 
$$
((\mathbb N^0)^{\otimes \mathbb N}, 
(\mathcal P(\mathbb N^0))^{\otimes \mathbb N}, (X(n))_{n \geq 0},  (\mathbb P_x)_{x \in \mathbb N_0}, \theta)
$$ the canonical space associated to this process, that is the space of trajectories of the Markov chain $(X(n))_{n \geq 0}$. In particular, $\mathbb P_x, x \in \mathbb N_0$, denotes the {\it conditional probability} with respect to the event $[X(0)=x]$ and $\mathbb E_x$ the corresponding {\it conditional expectation}. The operator $\theta$ is the classical {\it shift transformation} defined by: for any   $(x_k)_{k \geq 0} \in (\mathbb N^0)^{\otimes \mathbb N}$,
\[
\theta ((x_k)_{k \geq 0})=((x_{k+1})_{k \geq 0}.
\] 
For   $n\geq 1$ and $ x, y \geq 0$, set
\begin{equation*}\label{Rnxy}
R_n(x, y):= \mathbb P_x[r_1 =n, X(n)=y],
\end{equation*}
and
\begin{equation*}\label{Sigmanxy}
\Sigma_n(x, y):= \sum_{k=1}^{+\infty}\mathbb P_x[r_k =n, X(n)=y]. 
\end{equation*}
We are interested in the behavior as $n \to +\infty$ of these quantities. It   has been already studied  in \cite{MarcLong} (see Lemma 7) for the Lindley process. For the reflected random walk, the argument is   more complicated since the position at time $r_k$ may vary, so that the excursions of the random walk $(X(n))_{n \geq 0}$ between two successive reflection times are not independent. This explain why  we focus here on the reflection process and  it is of interest 
to  express quantities  $R_n(x, y)$ and $\Sigma_n(x, y)$ in terms of operators and product of operators related to this sub-process.

We consider  the linear operators  $R_n: L^{\infty}(\mathbb N_0) \to L^{\infty}(\mathbb N_0), n \geq 0$,  defined by: for any $\phi \in L^{\infty}(\mathbb N_0)$ and $x \geq 0$, 
\[
R_n \phi(x) = \sum_{y \geq 1} R_n(x,y)\phi(y)=\mathbb E_x[  r_1=n; \phi(X( n))].
\]
In particular,  $R_n(x, y)=R_n \1_{\{y\}}(x)$. 
The quantity $\Sigma_n(x, y)$ is also expressed in terms of the $R_k$ as follows:  
\begin{align}\label{renewalsigma}
\Sigma_n(x, y)&= \sum_{k=1}^{+\infty}\mathbb P_x[r_k=n, X(n)=y] \notag
\\
&= \sum_{k=1}^{+\infty} \sum_{j_1+\cdots + j_k = n} \pr_x[r_1 =j_1, r_2  - r_1 = j_2, \ldots, r_{k}  - r_{k-1}  = j_k, X(n) = y] \notag\\
&= \sum_{k=1}^{+\infty} \sum_{j_1+\cdots + j_k = n} 
R_{j_1}\ldots R_{j_k}\1_{\{y\}}(x)
\end{align}
Firstly, let us check that 
the   $R_n$ act  on $\mathcal B_\alpha$.   

\begin{lemma} \label{normeBalphaRn}There exists a  positive constant  $C_4$ such that, for any $n \geq 1$ and $\alpha >0$,  
	\begin{equation*}
	\label{Rn_C3}
	\vert R_n\vert_ \alpha  \leq C_4 \frac{ \bE\left[ (\xi_n^-) ^{2+\alpha} \right]}{n^{3/2}}.
	\end{equation*} 
\end{lemma}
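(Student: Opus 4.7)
The plan is to unfold the definition of $R_n(x,y)$ in terms of the underlying random walk $S$. Since the reflected process coincides with $x+S(k)$ as long as this quantity stays non-negative, the event $[r_1=n,\,X(n)=y]$ coincides with $[\tau^S(x)=n,\,x+S(n)=-y]$. Conditioning on $x+S(n-1)=z\geq 0$ and on the last step $\xi_n=-y-z$ gives the decomposition
\[
R_n(x,y)=\sum_{z\geq 0}\pr\bigl[\tau^S(x)>n-1,\ x+S(n-1)=z\bigr]\,\mu(-y-z),
\]
reducing the whole estimate to the two-sided fluctuation bounds of Section~\ref{sec:2.1}.

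Next I would apply the uniform estimate of Lemma~\ref{LemA}(2) (the case $n=1$ being handled separately by the direct identity $R_1(x,y)=\mu(-x-y)$): for $n\geq 2$,
\[
R_n(x,y)\preceq\frac{h(x)}{n^{3/2}}\sum_{z\geq 0}\tilde h(z)\,\mu(-y-z).
\]
For $\phi\in\mathcal B_\alpha$, this yields
\[
|R_n\phi(x)|\leq|\phi|_\alpha\,\frac{h(x)}{n^{3/2}}\sum_{z\geq 0}\tilde h(z)\sum_{y\geq 1}(1+y^\alpha)\mu(-y-z).
\]
Since $h(x)\leq c\,(1+x^\alpha)$ for $\alpha\geq 1$ (the range of interest announced in the paper), the prefactor already supplies the correct $(1+x^\alpha)/n^{3/2}$ dependence, and it remains only to bound the double sum by a multiple of $\bE[(\xi^-)^{2+\alpha}]$.

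This last step is the combinatorial heart of the argument. Using $\tilde h(z)\leq c\,(1+z)$ and substituting $w=y+z$, the double sum rewrites as
\[
\sum_{w\geq 1}\mu(-w)\sum_{z=0}^{w-1}(1+z)\bigl(1+(w-z)^\alpha\bigr).
\]
A direct estimate shows the inner sum is $O(w^{2+\alpha})$, whence the outer sum is controlled by $\bE[(\xi^-)^{2+\alpha}]$, which yields the desired bound on $|R_n|_\alpha$.

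The main obstacle I expect is precisely this bookkeeping: one must not squander any linear power coming from $h$ or from $\tilde h$, nor use more than the weight $y^\alpha$ imposed by $\mathcal B_\alpha$, in order to hit the moment exponent $2+\alpha$ exactly. A crude majorization such as $(1+y^\alpha)\leq (1+y)^{1+\alpha}$ would force the $w$-sum to be $O(w^{3+\alpha})$ and thus demand a strictly stronger moment than the hypothesis $\bE[(\xi^-)^{2+\alpha}]<+\infty$.
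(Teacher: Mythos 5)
Your proposal is correct and follows essentially the same route as the paper's proof: the same decomposition of $\pr_x[r_1=n,\,X(n)=y]$ over the position $z$ at time $n-1$, the same application of Lemma~\ref{LemA}(2) together with $h(x)=O(x)$ and $\tilde h(z)=O(z)$, and the same change of variable $w=y+z$ leading to the $\sum_w w^{2+\alpha}\mu(-w)$ bound. You are in fact slightly more careful than the paper in flagging the $n=1$ case and in noting that absorbing $h(x)$ into $1+x^\alpha$ requires $\alpha\geq 1$ (an implicit restriction in the paper's own argument, harmless since only $\alpha=1$ is used).
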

\begin{proof} 
	For any $\phi \in \mB_\alpha$ and $x \geq 0$, 
	\begin{align*}
	|R_n\phi(x)|  &\leq \sum_{y \geq 1} |\phi(y)| \pr_x[r_1 = n, X(n)= y] \\
	& = \sum_{y \geq 1} \sum_{z \geq 0} |\phi(y)|  \pr[\tau^S(x) \geq n-1, x + S(n-1) = z, z + \xi_n = -y] \\
	& = \sum_{y \geq 1} \sum_{z \geq 0}  |\phi(y)|  \pr[\tau^S(x) \geq n-1, x + S(n-1) = z] \pr[\xi_n = -y-z].
	\end{align*}	
	Hence, by Lemma \ref{LemA},
	\begin{align*}
	\frac{|R_n\phi(x)|}{1+x^\alpha} &\preceq \frac{1}{n^{3/2}}\sum_{y \geq 1} \sum_{z \geq 0} |\phi(y)| \frac{h(x)}{1+ x^\alpha}  \tilde{h}(z) \pr[\xi_1 = - y - z].
	\end{align*}
	Since $h(x)= O(x)$ and $\tilde{h}(z) = O(z)$, 
	\begin{align*}
	\frac{|R_n\phi(x)|}{1+x^\alpha}  &\preceq  \frac{\vert\phi\vert_ \alpha}{n^{{3/2}}}  \sum_{y \geq 1} \sum_{z \geq 0} (1+y^\alpha )\tilde{h}(z) \pr[\xi_1 = - y - z]\\
	&\preceq \frac{\vert\phi\vert_ \alpha}{n^{{3/2}}}  \sum_{y \geq 1} \sum_{z \geq 0} (1+y^\alpha )z \pr[\xi_1 = - y - z]\\
	&= \frac{\vert\phi\vert_ \alpha}{n^{{3/2}}}  \sum_{t \geq 1} \sum_{y = 1}^{t} (1+y^\alpha )(t-y) \pr[\xi_1 = - t]\\
	&\preceq  \frac{\vert\phi\vert_ \alpha}{n^{{3/2}}}  \sum_{t \geq 1} t^{2+\alpha}\pr[\xi_1 = - t],
	\end{align*}
	which achieves the proof. 
\end{proof}
Hence, $\displaystyle \sum_{n \geq 1} \vert R_n\vert_\alpha<+\infty$; in particular, the sequence $ (\sum_{n=1}^N R_n)_{N\geq 1}$ converges  in $\mathcal B_\alpha$. Note that its limit equals  $\mathcal R$ in $\mathcal B_\alpha$;  
indeed, 
	$$\sum_{n\geq 1} R_n \phi(x) = \sum_{n \geq 1} \mathbb{E}_x[\phi(X(n)), r_1 = n] = \mathbb{E}_x[\phi(X(r_1))]= \mathcal{R}\phi(x).$$
	We can write 
$\mathcal R= \sum_{n\geq 1} R_n$  
and, for any $z\in \overline{\mathbb D}:= \{z \in \mathbb C: \vert z\vert \leq1\}$, we set 
$$\mathcal R(z)= \sum_{n \geq 1} z^nR_n.$$

\begin{proposition} \label{renewalsequence}
	Fix $\alpha >0$ and assume $\mathbb E[ \xi_n^2]+\mathbb E[(\xi_n^-)^{2+\alpha}]<+\infty$ and $\mathbb E[\xi_n]=0$.
	The sequence $(R_n)_{n \geq 0}$  is an   {\bf aperiodic renewal  sequence of operators}, i.e. it satisfies the  following properties   (see \cite{Gouezel}):

	{\rm  (R1). }  The operator $\mathcal R=\mathcal R(1)$ has a simple eigenvalue at $1$ and the rest of its spectrum is contained in a disk of radius $<1$. 
	
	{\rm  (R2).}    For any $n \geq 1$, set ${\bf r}_n:= \nu R_n\1=\sum_{x\geq 1} \nu(x) \mathbb P_x(r_1=n);$ hence,   
	$$
	\Pi R_n \Pi = {\bf r}_n \Pi,
	$$
	where  $\Pi$ denotes  the eigenprojection of $\ \mathcal R $ for the eigenvalue 1.

	{\rm  (R3).}  There exists a constant  ${\bf C}>0$ such that  $\quad  \vert R_n \vert_\alpha \leq {{\bf C}\over  n^{{3/2}}}.$ 
	
	{\rm  (R4).}  $  \sum_{j>n} {\bf r}_j \sim {{\bf c}\over \sqrt{n} }$ with ${\bf c}= c_1\nu(h)$, where $c_1$ is  the positive constant given by Lemma \ref{LemA} and $h$ is the descending renewal function   of the random walk $S$.
	
	{\rm (R5).}  The spectral radius of $\ \mathcal R(z)$ is strictly less than $1$ for $z \in \overline{\mathbb D}\setminus\{1\}$. 
	
\end{proposition}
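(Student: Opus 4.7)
Properties (R1) and (R3) are direct: (R1) is Proposition~\ref{propspectral}(1)--(3) and (R3) is Lemma~\ref{normeBalphaRn}. For (R2), I would unfold the definitions. Since $\Pi\phi=\nu(\phi){\bf 1}$ and ${\bf 1}$ is constant,
\begin{equation*}
\Pi R_n \Pi \phi \;=\; \nu(\phi)\,\Pi(R_n{\bf 1}) \;=\; \nu(\phi)\,\nu(R_n{\bf 1})\,{\bf 1} \;=\; {\bf r}_n\,\Pi\phi,
\end{equation*}
using only the definitions of $\Pi$ and of ${\bf r}_n = \nu R_n {\bf 1}$.

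The bulk of the work lies in (R4). Starting from $X(0)=x$, the reflected walk coincides with $x+S(\cdot)$ until the first reflection, so $\mathbb P_x(r_1>n)=\mathbb P(\tau^S(x)>n)$; Fubini then gives
\begin{equation*}
\sum_{j>n}{\bf r}_j \;=\; \sum_{j>n}\nu R_j{\bf 1} \;=\; \sum_{x\geq 1}\nu(x)\,\mathbb P(\tau^S(x)>n).
\end{equation*}
Lemma~\ref{LemA} supplies both the pointwise asymptotic $\sqrt n\,\mathbb P(\tau^S(x)>n)\to c_1 h(x)$ and the uniform bound $\sqrt n\,\mathbb P(\tau^S(x)>n)\leq C_1 h(x)$. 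Dominated convergence then yields $\sqrt n\sum_{j>n}{\bf r}_j \to c_1\nu(h)$, provided $\nu(h)<+\infty$. Since $h(x)=O(x)$, this reduces to checking that $\nu$ has finite first moment; splitting the explicit sum defining $\nu(x)$ into its three pieces and applying Fubini, each piece is controlled by $\mathbb E[|S(\ell_1)|]^2$, which is finite because $\mathbb E[\xi_n^2]<+\infty$ forces $\mathbb E[|S(\ell_1)|]<+\infty$.

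For (R5), I argue by contradiction: suppose $\mathcal R(z)\phi=\lambda\phi$ with $\phi\neq 0$, $|\lambda|=1$ and $z\in\overline{\bD}\setminus\{1\}$. From $|\mathcal R(z)\phi|\leq \mathcal R|\phi|$, the operator $\mathcal R(z)$ sends $\mathcal B_\alpha$ into $L^\infty(\mathbb N_0)$, so $\phi$ is bounded and $|\phi|\leq \mathcal R|\phi|$; integrating against $\nu$ and using $\nu\mathcal R=\nu$ forces equality on $\mathbb S_r$, so by Proposition~\ref{propspectral}(2) $|\phi|$ is constant on $\mathbb S_r$. Normalize $|\phi|\equiv 1$ and write $\phi=e^{i\varphi}$ on $\mathbb S_r$. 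When $|z|<1$, the chain $|\phi|\leq |\mathcal R(z)\phi|\leq |z|\,\mathcal R|\phi|=|z|\,|\phi|$ on $\mathbb S_r$ is already impossible. When $|z|=1$, $z\neq 1$, equality in the triangle inequality forces $z^n\,e^{i\varphi(y)}=\lambda\,e^{i\varphi(x)}$ for every $n,y$ with $R_n(x,y)>0$, so any two indices $n_1<n_2$ with $R_{n_1}(x,y)>0$ and $R_{n_2}(x,y)>0$ must satisfy $z^{n_2-n_1}=1$. The main obstacle is this last step: I would use the strong aperiodicity assumption~{\bf A3} together with $\mathbb E[\xi_n]=0$ to construct, for some fixed pair $x,y\in\mathbb S_r$, positive-probability trajectories of $x+S(\cdot)$ of two consecutive lengths $n$ and $n+1$ that remain non-negative up to the last step and land at $-y$ at that step; this yields $z=z^1=1$, the desired contradiction.
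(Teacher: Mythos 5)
Your proof follows the paper's argument essentially step for step: (R1)--(R3) by direct reference to Proposition~\ref{propspectral} and Lemma~\ref{normeBalphaRn}, (R2) by the same one-line computation with $\Pi\phi=\nu(\phi){\bf 1}$, (R4) via Lemma~\ref{LemA} plus dominated convergence together with the same bound $\sum_x x\,\nu(x)\preceq\bigl(\mathbb E[|S(\ell_1)|]\bigr)^2$, and (R5) via compactness, the super-harmonicity of $|\phi|$, and the equality case of the triangle inequality on $\mathbb S_r$. The only point of divergence is the end of (R5), where you correctly isolate the need for two admissible reflection lengths whose differences have gcd $1$ and propose to produce them from assumption {\bf A3}; the paper elides this by asserting $z^n e^{i\varphi(y)}=e^{i\theta}e^{i\varphi(x)}$ ``for all $n\geq 1$'' when convexity only yields it for $(n,y)$ in the support of $(r_1,X(r_1))$ under $\mathbb P_x$, so the ``obstacle'' you flag is genuine but is exactly the step the published proof also leaves implicit, and your sketched aperiodicity construction is the right way to close it.
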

\begin{proof}
	(R1) is a direct consequence of  Proposition \ref{propspectral}.
	
	\noindent (R2) Recall that $\Pi \phi =  \nu(\phi)   \1$ for any $\phi \in \mathcal B_\alpha$. Hence, setting $g_n(x):= \mathbb P_x (r_1=n)$, it holds
	$R_n \Pi \phi   = \nu(\phi) g_n$, thus 
	$$\Pi R_n \Pi \phi   = \nu(\phi) \Pi(g_n)=\sum_{x\geq 1} \nu(x) \mathbb P_x(r_1=n) \nu(\phi)\1,$$
	which is the expected result.
	
	\noindent (R3) follows from Lemma \ref{normeBalphaRn}.
	
	\noindent (R4)  Thanks to  Lemma \ref{LemA},    
	\begin{align*}
	\sum_{j\geq n} \mathbf{r}_j &=  \sum_{x \geq 1} \sum_{j\geq n} \nu(x)\pr_x[r_1 = j] =  \sum_{x \geq 1} \nu(x)\pr_x[r_1  \geq n]  
	\sim  c_1 \frac{ \nu(h)}{\sqrt{n}} \quad \text{as} \quad n \to +\infty.
	\end{align*}
	Notice that  $0<\nu(h)<+\infty$  since $\bE[|S(\ell_1)|] <+ \infty$; indeed, $1\leq h(x)= O(x)$ and 
	\begin{align*}
	\sum_{x \geq 1}x\nu(x) &   \preceq  \sum_{x\geq 1} \sum_{y \geq 1} \sum_{w=x}^{x+y} \mu^*(-w) \mu^*(-y)x 
	=  \sum_{y \geq 1} \sum_{w\geq 1} \mu^*(-w) \mu^*(-y) \sum_{x= (w-y)\vee 0}^{w} x \\
	&\leq \sum_{y \geq 1} \sum_{w\geq 1} yw \mu^*(-w) \mu^*(-y) = \left(\sum_{y\geq 1} y\mu^*(-y) \right)^2 =  \left(\bE[|S(\ell_1)|] \right)^2<+\infty.
	\end{align*}
	
	\noindent (R5)	The argument is the same as the one used to control the peripherical spectrum of $\mathcal R$ in Proposition  \ref{propspectral}. For any $z \in \overline{\mathbb D}\setminus\{1\}$,  the operators $\mathcal R(z)$ are compact on $\mathcal B_\alpha$, with spectral radius $\rho_z\leq 1$.  
	
	If $\rho_z= 1$, there exist $\theta \in \mathbb R$ and $\phi \in \mathcal B_\alpha$ such that  $\mathcal R(z)\phi=e^{i\theta} \phi.$  
	Hence $\vert \phi\vert= \vert \mathcal R(z) \phi\vert  \leq \mathcal R\vert \phi\vert$ and since $\mathcal R(\mathcal B_\alpha)\subset L^\infty(\mathbb N_0)$, the function $\vert \phi\vert $ is bounded on $\mathbb N_0$, thus  constant on $\mathbb S_r$.

	Without loss of generality, we may assume $\vert \phi\vert =1$ on $\mathbb S_r$, i.e  $\phi (x)= e^{i\varphi(x) }$ for  any $x \in \mathbb S_r$, with $\varphi: \mathbb S_r\to \mathbb R$. Equality  $\mathcal R(z)\phi=e^{i\theta} \phi$ may be rewritten as: for any $x \in \mathbb S_r,$
	\[
	\sum_{n \geq 1} \sum_{y \in \mathbb S_r} z^n e^{i\varphi(y)}\mathbb P_x(r_1=n; X(n)=y)=e^{i \theta}e^{i\varphi(x)}.
	\]
	By convexity, since $\sum_{n \geq 1} \sum_{y \in \mathbb S_r}  \mathbb P_x(r_1=n; X(n)=y)=1$,  we obtain: for all $n \geq 1$ and $x, y \in \mathbb S_r$,
	\[
	z^ne^{i\varphi(y)}=e^{i \theta}e^{i\varphi(x)}.
	\]
	Setting $x=y$, it yields $z^n= e^{i \theta}$, so that $z^n$ does not depend on $n$. Finally $z=1$. Thus, $\rho_z<1$ when $z \in \overline{\mathbb D}\setminus\{1\}$.

\end{proof}

By  (R5), for $\vert z\vert  <1$, the operator $T(z):= (I-\mathcal R(z))^{-1}$ is well defined in $\mathcal B_\alpha$; a direct formal computation yields 
$T(z)= \sum_{n=0}^{+\infty} T_n z^n$,   where the
$T_n$ are bounded operators on $\mathcal B_\alpha$   defined   by:  
$$T_0=I \qquad \text{and}\qquad T_n = \sum_{k=1}^{+\infty} \sum_{j_1+\cdots + j_k = n} R_{j_1}\cdots R_{j_k}\quad \text{for} \quad n \geq 1.$$

The so-called {\it renewal equation}    $T(z):= (I-R(z))^{-1}$  is of fundamental importance to understand the asymptotics of the $T_n$, several functional analytic tools can be brought into play.
Such sequences of operators  $(R_n)_{n \geq 0}$ and $(T_n)_{n \geq 0}$  have been  the object of many studies, related to renewal theory in a non-commutative  setting.  We refer to the paper \cite{Gouezel}, which fits perfectly  here. 
The following statement is  analogous of the last assertion of Corollary \ref{cororenewal} for  the reflected random walk. 
\begin{corollary} \label{G4.1}
	The sequence $(\sqrt{n}  T_n)_{n \geq 1}$ converges in $\mathcal B_\alpha$ towards the operator ${1\over \pi c_1 \nu(h)} \Pi$.
\end{corollary}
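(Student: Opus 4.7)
The strategy is a direct application of the operator renewal theorem from \cite{Gouezel}: the five conditions (R1)-(R5) established in Proposition \ref{renewalsequence} are exactly the hypotheses of that theorem, and the tail exponent in (R4) equals $1/2$, which places the scalar distribution $(\mathbf{r}_n)_{n \geq 1}$ in the domain of attraction of a positive $1/2$-stable law with tail constant $\mathbf{c} = c_1 \nu(h)$.

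Starting from the renewal identity $T(z)(I - \mathcal{R}(z)) = I$ valid for $|z| < 1$, I would use the spectral decomposition $\mathcal{R} = \Pi + \mathcal{Q}$, together with the fact that $\Pi \mathcal{R}(z) \Pi = \mathbf{r}(z) \Pi$ where $\mathbf{r}(z) = \sum_{n \geq 1} \mathbf{r}_n z^n$, to split
\[
T(z) = \frac{1}{1 - \mathbf{r}(z)} \, \Pi + T_{\mathrm{reg}}(z),
\]
for $z$ in a neighborhood of $1$ inside $\overline{\mathbb D}$, where $T_{\mathrm{reg}}(z)$ is the contribution coming from the non-projective part $\mathcal Q(z):= \mathcal{R}(z)(I - \Pi)$. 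By (R5), the operator $I - \mathcal Q(z)$ is invertible on a neighborhood of the entire closed unit disk, so $T_{\mathrm{reg}}(z)$ is analytic there and its Taylor coefficients decay faster than $1/\sqrt n$. Property (R3), which gives $\lvert R_n \rvert_\alpha = O(n^{-3/2})$, provides the H\"older regularity of $\mathcal{R}(z)$ up to the unit circle needed to make this rigorous in the operator norm on $\mathcal B_\alpha$. Hence the asymptotics of $\sqrt n \, T_n$ are dictated solely by the $n$-th Taylor coefficient of the scalar function $1/(1 - \mathbf{r}(z))$, times the projection $\Pi$.

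To finish, apply the scalar Tauberian/renewal theorem of Garsia--Lamperti (boundary case $\alpha = 1/2$, due to Erickson): condition (R4) yields
\[
1 - \mathbf{r}(z) \sim \mathbf{c} \sqrt{\pi (1 - z)} \qquad \text{as } z \to 1^-,
\]
so the $n$-th Taylor coefficient of $1/(1 - \mathbf{r}(z))$ is asymptotically $1/(\pi \mathbf{c} \sqrt n)$. Substituting $\mathbf{c} = c_1 \nu(h)$ gives
\[
\sqrt n \, T_n \longrightarrow \frac{1}{\pi c_1 \nu(h)} \, \Pi \qquad \text{in } \mathcal{B}_\alpha.
\]

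The principal obstacle is to make the heuristic splitting of $T(z)$ rigorous at the operator-valued level: one has to control $T_{\mathrm{reg}}$ uniformly on the full closed unit disk, not just near $z = 1$, and to upgrade the scalar Tauberian statement to one producing convergence in the operator norm $\lvert \cdot \rvert_\alpha$. This is precisely the content of the main result of \cite{Gouezel}, whose framework matches ours term for term via Proposition \ref{renewalsequence}, so no new technical input is required beyond a careful citation.
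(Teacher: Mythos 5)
Your proposal is correct and takes essentially the same route as the paper: the paper's entire proof is the single line ``Apply Theorem 1.4 in \cite{Gouezel} with $\beta=1/2$ and $\ell(n)={\bf c}=c_1\nu(h)$'', which is exactly the citation you arrive at, and your constant check ($\sin(\pi\beta)/\pi=1/\pi$ at $\beta=1/2$, giving $T_n\sim \Pi/(\pi{\bf c}\sqrt n)$) is consistent. The additional sketch you give of the splitting $T(z)=(1-{\bf r}(z))^{-1}\Pi+T_{\mathrm{reg}}(z)$ and the scalar Tauberian step is an accurate outline of what happens inside Gou\"ezel's theorem, but the paper does not reproduce it.
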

\begin{proof} Apply Theorem 1.4 in \cite{Gouezel} with $\beta=1/2$ and $\ell(n)= {\bf c} = c_1\nu(h)$.
\end{proof}

As a direct consequence, by equality (\ref{renewalsigma}), it holds 
\[
\lim_{n \to +\infty} \sqrt{n}\Sigma_n(x,y) ={\nu(y)\over \pi c_1 \nu(h)}. 
\]

In the next section,  we have to consider and study some  modifications of the $\Sigma_n(x,y)$ which we introduce now. 
For  any $x \geq 0$ and $0< s< t<1$,  
\begin{align*}
\widehat{\Sigma}_n(x,t,s) &:= n \sum_{l\geq 0} \pr_x[r_l = [ns], r_{l+1} > [nt]],
\end{align*}
and
\begin{align*}
\widetilde{\Sigma}_n(x,t,s)& :=    n^2 \sum_{l=0}^{+\infty} \pr_x \left[ r_l= [ns], r_{l+1} = [nt] \right].
\end{align*}
These quantities appear in a natural way to control the finite distribution of the process $(X_n(t))_{n \geq 0}.$

\section{Proof of Theorem \ref{meander}}
From now on,  we fix  $\alpha = 1; $  this implies that $h\in \mathcal B_\alpha$, which is necessary from now on (see Lemmas \ref{lem:M} and \ref{lem:M'}).

\subsection{One-dimensional distribution} 
We fix a  bounded and Lispchitz continuous function $\phi: \mathbb R\to \mathbb R$.

\begin{lemma}\label{1-dim}
	For any $t\in [0, 1]$ and $x \geq 0$, it holds
	$$\lim_{n\to +\infty}\bE_x \left[ \phi \left(X_n(t)\right)\right] 
	=\int_0^{+\infty} \phi(u) \frac{2e^{-u^2/2t}}{\sqrt{2\pi t}}du = \bE[\phi(|B_t|)],$$
	where $B$ is a standard Brownian motion. 
\end{lemma}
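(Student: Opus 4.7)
The plan is to decompose $\bE_x[\phi(X_n(t))]$ according to the (random) index $l$ of the last reflection before time $[nt]$. On the event $\{r_l = k,\, X(r_l) = y,\, r_{l+1} > [nt]\}$ the strong Markov property at $r_l$ gives $X([nt]) = y + (\xi_{k+1}+\cdots+\xi_{[nt]})$, the increment being distributed as $S([nt]-k)$ and independent of $\mathcal G_{r_l}$, while $\{r_{l+1} > [nt]\}$ becomes $\{\tau^S(y) > [nt]-k\}$ for this fresh walk. Summing over $l \geq 1$ (the term $l=0$ giving a separate contribution involving $\tau^S(x) > [nt]$) and using $\sum_{y\geq 1}\Sigma_k(x,y)\phi(y) = T_k\phi(x)$, this yields
\begin{align*}
\bE_x[\phi(X_n(t))] &= \bE\Bigl[\phi\Bigl(\tfrac{x+S([nt])}{\sigma\sqrt{n}}\Bigr);\, \tau^S(x) > [nt]\Bigr] + \sum_{k=1}^{[nt]} T_k\, \psi_{n,t,\,[nt]-k}(x),
\end{align*}
with $\psi_{n,t,j}(y) := \bE\bigl[\phi\bigl(\tfrac{y+S(j)}{\sigma\sqrt{n}}\bigr);\, \tau^S(y) > j\bigr]$. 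By Lemma~\ref{LemA} the first term is $O(1/\sqrt n)$, hence negligible.

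For the main sum I would combine three inputs. First, Lemma~\ref{LemA} gives $\psi_{n,t,j}(y) \leq \vert\phi\vert_\infty\, C_1 h(y)/\sqrt j$, whence $\vert\psi_{n,t,j}\vert_1 \preceq 1/\sqrt j$ since $h(y)=O(y)$. Second, Corollary~\ref{G4.1} yields $\sqrt k\, T_k = \Pi/(\pi c_1 \nu(h)) + o(1)$ in operator norm on $\mathcal B_1$, so
\[
T_k\, \psi_{n,t,[nt]-k}(x) = \frac{\nu(\psi_{n,t,[nt]-k})}{\sqrt k\,\pi c_1 \nu(h)} + o\!\left(\tfrac{1+x}{\sqrt{k\,([nt]-k)}}\right).
\]
Third, Lemma~\ref{LemC2} combined with the tail asymptotics of Lemma~\ref{LemA} and dominated convergence (justified by $\nu(h)<+\infty$, from the proof of (R4)) gives, for $k=[ns]$ with $0<s<t$,
\[
\sqrt{[nt]-k}\;\nu(\psi_{n,t,[nt]-k}) \longrightarrow c_1 \nu(h)\, \Phi(t-s), \qquad \Phi(u) := \int_0^{+\infty}\phi\bigl(\sqrt u\, z\bigr)\, z e^{-z^2/2}\, dz.
\]
Substituting back, each middle-range summand behaves like $(1/n)\,\Phi(t-k/n)/\bigl(\pi\sqrt{(k/n)(t-k/n)}\bigr)$, a Riemann sum whose limit is $I(t) := \int_0^t \Phi(t-s)/\bigl(\pi\sqrt{s(t-s)}\bigr)\, ds$.

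Finally, an elementary Fubini computation (substitute $w=\sqrt u\, z$, then $u=t\sin^2\theta$) shows
\[
\int_0^t \frac{w\, e^{-w^2/(2u)}}{\pi u^{3/2}\sqrt{t-u}}\, du = \frac{2\,e^{-w^2/(2t)}}{\sqrt{2\pi t}},
\]
hence $I(t) = \int_0^{+\infty}\phi(w)\, 2 e^{-w^2/(2t)}/\sqrt{2\pi t}\,dw = \bE[\phi(|B_t|)]$; this is nothing but the arcsine-law decomposition of the density of $|B_t|$ according to the last zero of Brownian motion before $t$, the post-zero piece being an independent Brownian meander of length $t-s$ with Rayleigh-distributed terminal value. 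The main technical obstacle is to justify the Riemann-sum passage uniformly across the two singularities at $s=0$ and $s=t$: the crude bounds $\Vert T_k\Vert_1 \preceq 1/\sqrt k$ and $\vert\psi_{n,t,j}\vert_1 \preceq \min\bigl(\vert\phi\vert_\infty, 1/\sqrt j\bigr)$ show that any fixed slab ($k \leq K$ or $[nt]-k \leq M$) contributes $O_x(1/\sqrt n)$ and so vanishes; on the middle range $[\delta n, (t-\delta)n]$ the three asymptotics above hold uniformly, and dominated convergence against the integrable envelope $1/\sqrt{s(t-s)}$ completes the argument. Letting $\delta \to 0$ finishes the proof.
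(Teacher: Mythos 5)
Your proposal is correct and follows essentially the same route as the paper: decomposition over the time of the last reflection before $[nt]$, the operator renewal theorem (Corollary \ref{G4.1}) combined with Lemmas \ref{LemA} and \ref{LemC2}, dominated convergence against the integrable envelope $(1+x)/\sqrt{s(t-s)}$, and the arcsine-type integral identity (which the paper evaluates via the It\^o--McKean formula). The only differences are cosmetic: you apply $\sqrt{k}\,T_k \to \Pi/(\pi c_1\nu(h))$ directly to the bundled function $\psi_{n,t,j}$, whereas the paper factors the reflection weight and the conditional meander expectation separately through Lemmas \ref{lem:M} and \ref{lem:prod}, and you omit only the one-line Lipschitz estimate replacing $X([nt])/(\sigma\sqrt{n})$ by the interpolated $X_n(t)$.
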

\begin{proof}	
	We fix  $t\in (0, 1)$ and    decompose the expectation $\displaystyle \bE \left[ \phi \left( \frac{X({[nt]})}{{\sigma}\sqrt{n}}\right)\right] $ as follows: 
	\begin{align*}
	&\bE_x \left[ \phi \left( \frac{X({[nt]})}{{\sigma}\sqrt{n}}\right)\right] \\
	&\approx  \sum_{k=0}^{[nt]-1} \sum_{l\geq 0}  \bE_x \Bigl[ \phi \left( \frac{X({[nt]})}{{\sigma}\sqrt{n}}\right);
	\\
	&\hskip 3cm 
	 r_l = k, X(k) + \xi_{k+1} \geq  0, \ldots, X(k) + \xi_{k+1} + \cdots + \xi_{[nt]} \geq 0 \Bigr]\\
	&=  \sum_{k=0}^{[nt]-1}  \sum_{y\geq 0}  \Sigma_k(x, y) \bE \Bigl[ \phi \left( \frac{y+\xi_{k+1} + \ldots + \xi_{[nt]} }{{\sigma}\sqrt{n}}\right);
	\\
	&\hskip 3cm 
	 y + \xi_{k+1} \geq  0, \ldots, y + \xi_{k+1} + \cdots + \xi_{[nt]} \geq 0 \Bigr]\\
	&=  \sum_{k=0}^{[nt]-1}  \sum_{y\geq 0} \Sigma_k(x, y)   \bE \left[ \phi \left( \frac{y+S([nt]-k) } {{\sigma}\sqrt{n}}\right)| \tau^S(y) > [nt]-k \right] \\
	&\hskip 5cm \times  \pr\left[\tau^S(y) > [nt] - k\right].		
	\end{align*}

	For each $k = 2, \ldots, [nt]-4 $ and  any $s \in [\frac kn, \frac{k+1}{n})$, 
	\begin{align*}
	f_n(s) &=  n\sum_{y\geq 0} \Sigma_{[ns]}(x, y) \bE \left[ \phi \left( \frac{y+S([nt]-[ns]) } {{\sigma}\sqrt{n}}\right)| \tau^S(y) > [nt]-[ns] \right] \\
	 \\
	&\hskip 3cm  \times  \pr\left[\tau^S(y) > [nt] - [ns] \right],
	\end{align*}
	and $f_n(s)= 0$ on $[0, {2\over n})$ and $[{[nt]-1\over n}, t)$. Hence,
	\begin{align*}
	\bE_x \left[ \phi \left( \frac{X({[nt]})}{\sigma\sqrt{n}}\right)\right] & = \int_0^{t} f_n(s)ds +{\mathcal O}\left({1\over \sqrt{n}}\right).
	\end{align*}
	Now, let us
	set : for $n\geq 1$ and  any $y \in \mathbb N_0$, 
	\begin{align*}
	a_n(y) &= \Sigma_{[ns]}(x, y)   \pr\left[\tau^S(y) > [nt] - [ns] \right],
	\\
	b_n(y) &=\bE \left[ \phi \left( \frac{y+S([nt]-[ns]) } {{\sigma}\sqrt{n}}\right)| \tau^S(y) > [nt]-[ns] \right].
	\end{align*}
	For any  $n\geq 1$,  it holds
	\[
	\sum_{y\geq 0} a_n(y)   = n \sum_{l\geq 0} \pr_x[r_l = [ns], r_{l+1} > [nt]]=: \widehat{\Sigma}_n(x,t,s),\\
	\]
	and $|b_n(y)| \leq \vert\phi\vert_\infty$. The two  following lemmas allow us to control the behavior as $n \to +\infty$ of the integral  $\displaystyle \int_0^{t} f_n(s)ds$; the proof of Lemma \ref{lem:M}   is postponed to the last section, the one of \ref{lem:prod} is straightforward. 
	\begin{lemma} \label{lem:M} 
		For each $0< s< t<1$,   
		\[
		\lim_{n\to+ \infty} \widehat{\Sigma}_n(x,t,s) = \frac{1}{\pi \sqrt{s(t-s)}}.
		\]
		Moreover, there exists a positive constant $C_5$ such that
		\[
		\widehat{\Sigma}_n(x,t,s) \leq  C_5 \frac{ 1+ x }{\sqrt{s(t-s)}} \ \text{ for all } \  0<s<t<1 \ \text{ and } \ x \in \n.
		\]
	\end{lemma} 
	\begin{lemma} \label{lem:prod} 
		Let  $(a_n(y))_{y \in \n_0^k}, (b_n(y))_{y \in \n_0^k}$ be arrays of real numbers for some integer $k\geq 1$. Suppose that 
		\begin{itemize}
			\item $a_n(y) \geq 0$;
			\item $\displaystyle \lim_{n\to+ \infty} \sum_{y \in \n_0^k } a_n(y) = A$;
			\item $\displaystyle\lim_{n\to+ \infty} b_n(y)= B$ for all $y \in \n_0^k$;
			\item $\displaystyle\sup_{ n\geq 1, y \in \n_0^k} |b_n(y)|   <+ \infty$. 
		\end{itemize}
		Then $$\lim_{n\to+ \infty} \sum_{y\geq 0} a_n(y)b_n(y) = AB.$$
	\end{lemma}
	Lemmas \ref{LemC2},  \ref{lem:M} and  \ref{lem:prod} combined altogether yield: for any $s\in (0,t)$,  
	\begin{align*}
	\lim_{n\to +\infty} f_n(s) &=   {1\over  {\pi}} \frac{1}{\sqrt{s(t-s)}}\int_0^{+\infty} \phi(z\sqrt{t-s})ze^{-z^2/2}dz.	 
	\end{align*}
	Moreover, 
	$$\sup_n |f_n(s)| \leq  \displaystyle C_5\frac{ 1+ x}{\sqrt{s(t-s)}} \vert\phi\vert_\infty   =: \hat{f}(s).$$
	Since  $\hat{f} \in L^1[0,t]$, the Lebesgue dominated convergence theorem yields 
	\begin{align*}
	\lim_{n\to +\infty}\bE \left[ \phi \left( \frac{X({[nt]})}{\sigma\sqrt{n}}\right)\right] &= \lim_{n\to +\infty} \int_0^{t}f_n(s) ds \\
	&= {1\over  {\pi}}\ \int_0^t \frac{1}{\sqrt{s(t-s)}} \left(\int_0^{+\infty} \phi(z\sqrt{t-s})ze^{-z^2/2}dz \right)ds \\
	&=\int_0^{+\infty} \phi(u) \frac{2e^{-u^2/2t}}{\sqrt{2\pi t}}du, 
	\end{align*}
	where the last equation follows from the identity (\cite{ItoMcKean}, p. 17)
	\begin{equation}\label{formulaIMK}
	\int_0^{+\infty} \frac{1}{\sqrt{ t}} \exp \Big( -\alpha t - \frac {\beta}{t}\Big)dt = \sqrt{\frac{\pi}{\alpha}}e^{-2\sqrt{\alpha \beta}} \quad (\alpha, \beta >0)
	\end{equation}
	and some change of variable computation. 
	We achieve the proof of  Lemma \ref{1-dim} by noting  that, since $\phi$ is Lipschitz continuous (with Lipschitz coefficient $[\phi]$), 
	\begin{align} \label{lagtime} 
	\left| 	\bE_x \left[ \phi \left( \frac{X({[nt]})}{\sigma\sqrt{n}}\right)\right] - \bE_x \left[ \phi \left(X_n(t)\right)\right] \right| 
	&\leq [\phi] \bE_x \left[ \left| 	 \frac{X({[nt]})}{\sigma\sqrt{n}}- X_n(t) \right|\right] \notag \\
	& \leq \frac{1}{\sigma\sqrt{n}} [\phi] \bE \left[  	 |\xi_{[nt]+1}| \right] \to 0 \text{ as } n \to+ \infty.  \qquad
	\end{align} 
\end{proof}

\subsection{Two-dimensional distributions}
The convergence of the finite-dimensional distributions of $(X_n(t))_{n \geq 1}$ is more delicate. We detail the argument for two-dimensional ones, the general case may be treated in a similar way. 

Let us   fix $0<s<t, n \geq 1$ and  denote 
$$\kappa =\kappa(n, s)= \min\{k>[ns]: X(k-1)+\xi_k < 0\}.$$
We decompose $\displaystyle  \bE_x \left[ \phi_1\left( \frac{X({[ns]})}{\sigma\sqrt{n}}\right) \phi_2 \left( \frac{X([nt])}{\sigma\sqrt{n}}\right)\right] $ as
\begin{align*}
&  \underbrace{\sum_{k = [ns]+1}^{[nt]} \bE_x \left[ \phi_1\left( \frac{X({[ns]})}{\sigma\sqrt{n}}\right) \phi_2 \left( \frac{X([nt])}{\sigma\sqrt{n}}\right)\1_{\{\kappa = k\}} \right]}_{A_1(n)} \\
&\hskip 5cm  + \underbrace{\bE_x \left[ \phi_1\left( \frac{X({[ns]})}{\sigma\sqrt{n}}\right) \phi_2 \left( \frac{X([nt])}{\sigma\sqrt{n}}\right)\1_{\{\kappa > [nt]\}} \right]}_{A_2(n)}.
\end{align*}
The term $A_1(n)$ deals with the trajectories of  the process $X$  which reflect between $[ns]+1$ and $[nt]$ while $A_2 (n)$   concerns the others trajectories.

\subsubsection{Estimate of $A_1(n)$} \label{Sec:3.2.1}
As in the previous section, we decompose $A_1(n)$ as   
\begin{align*}
A_1(n) &= \sum_{k_1 = 0}^{[ns]-1} \sum_{k_2 = [ns]}^{[nt]} \sum_{l=0}^{+\infty} \sum_{y\geq 1} \sum_{z \geq 1} \sum_{w\geq 0}\\
&\hskip 1cm  \bE_x \Bigg[ \phi_1\left( \frac{X({[ns]})}{\sigma\sqrt{n}}\right) \phi_2 \left( \frac{X([nt])}{\sigma\sqrt{n}}\right); \\
&\hskip 1cm r_l = k_1, X(k_1)= z,   z + \xi_{k_1+1} \geq  0, \ldots, z + \xi_{k_1+1} + \cdots + \xi_{k_2-2} \geq 0, \\
& \hskip 1cm  z + \xi_{k_1+1} + \cdots + \xi_{k_2-1} = w, w+ \xi_{k_2} = -y \Bigg] \\
&= \sum_{k_1 = 0}^{[ns]-1} \sum_{k_2 = [ns]}^{[nt]} \sum_{l=0}^{+\infty} \sum_{y\geq 1} \sum_{z \geq 1} \sum_{w\geq 0} \bE_x \Bigg[ \phi_1\left( \frac{z + \xi_{k_1+1} + \cdots + \xi_{[ns]}} {\sigma\sqrt{n}}\right) \\
&\hskip 1cm \times \phi_2 \left( \frac{y+\xi_{k_2+1}+ \cdots + \xi_{[nt]}}{\sigma\sqrt{n}}\right); \\ 
&\hskip 1cm  r_l = k_1, X(k_1)= z,   z + \xi_{k_1+1} \geq  0, \ldots, z + \xi_{k_1+1} + \cdots + \xi_{k_2-2} \geq 0, \\
&\hskip 1cm  z + \xi_{k_1+1} + \cdots + \xi_{k_2-1} = w, w+ \xi_{k_2} = -y \Bigg] \\
&= \sum_{k_1 = 0}^{[ns]-1} \sum_{k_2 = [ns]}^{[nt]} \sum_{l=0}^{+\infty} \sum_{y\geq 1} \sum_{z \geq 1} \sum_{w\geq 0} \\
&\hskip 1 cm\bE_x \Bigg[  \phi_2 \left( \frac{y+\xi_{k_2+1}+ \cdots + \xi_{[nt]}}{\sigma\sqrt{n}}\right)\Bigg]  \pr_x[r_l = k_1, X(k_1)= z]\\   
&\hskip 1 cm \times \bE_x \Bigg[  \phi_1\left( \frac{z + \xi_{k_1+1} + \cdots + \xi_{[ns]}} {\sigma\sqrt{n}}\right),
 \\
&\hskip 3cm   z + \xi_{k_1+1} \geq  0, \ldots, z + \xi_{k_1+1} + \cdots + \xi_{k_2-2} \geq 0,\\
&\hskip 5cm z + \xi_{k_1+1} + \cdots + \xi_{k_2-1} = w, w+ \xi_{k_2} = -y \Bigg].
\end{align*}
Using the  fact that the $\xi_k$ are i. i. d., we obtain 
\begin{align*}
A_1(n) &= \sum_{k_1 = 0}^{[ns]-1} \sum_{z \geq 1}  \Sigma_{k_1}(x, z)  
\sum_{k_2 = [ns]}^{[nt]}  \sum_{y\geq 1}  \sum_{w\geq 0} \bE_y \Bigg[  \phi_2 \left( \frac{X([nt]-k_2)}{\sigma\sqrt{n}}\right)\Bigg] \\   
& \times \bE \Bigg[  \phi_1\left( \frac{z + S([ns] -k_1)} {\sigma\sqrt{n}}\right)| \tau^S(z) > k_2 - k_1 -1, z+ S(k_2 - k_1-1) = w\Bigg]\\
& \times  \pr[\tau^S(z) > k_2 - k_1 -1, z+ S(k_2 - k_1-1) = w] \pr[\xi_{1} = -w-y]. 
\end{align*}
For  any $2\leq k_1< [ns]-6$ and $[ns] \leq   k_2\leq [nt]$ and any $s_1 \in  [ {k_1\over n},  {k_1+1\over n})$ and $s_2 \in  [ {k_2\over n},  {k_2+1\over n})$, we write 
\begin{align*}
f_n&(s_1, s_2) = n^2 \sum_{z \geq 1}  \Sigma_{[ns_1]}(x, z)  
\sum_{y\geq 1}  \sum_{w\geq 0} \bE_y \Bigg[  \phi_2 \left( \frac{X([nt]-[ns_2])}{\sigma\sqrt{n}}\right)\Bigg] \\   
& \times \bE \Bigg[  \phi_1\left( \frac{z + S([ns] - [ns_1])} {\sigma\sqrt{n}}\right)| \tau^S(z) > [ns_2] - [ns_1] -1, \\
&\hskip 5cm z+ S([ns_2] - [ns_1]-1) = w\Bigg]\\
& \times  \pr\left[\tau^S(z) > [ns_2] - [ns_1] -1, z+ S([ns_2] - [ns_1]-1) = w \right] \pr[\xi_{1} = -w-y],
\end{align*}
and $f_n(s_1, s_2)= 0$ for the others values of $k_1 $, such that $0\leq k_1\leq [ns]$. Hence,
\begin{align*}
A_1(n)= \int_0^s ds_1\int _s^t ds_2\  f_n(s_1, s_2)  +{\mathcal O}\left({1\over \sqrt{n}}\right).
\end{align*}
It follows from Lemma \ref{lemCC} that, for each $z,w \geq 0$,
\begin{align*}
&\lim_{n\to   +\infty} \bE \Bigl[  \phi_1\left( \frac{z + S([ns] - [ns_1])} {\sigma\sqrt{n}}\right)| \tau^S(z) > [ns_2] - [ns_1] -1, 
\\
&\hskip 4cm  z+ S([ns_2] - [ns_1]-1) = w\Bigr]\\
&= \int_0^{+\infty} 2\phi_1(u\sqrt{s_2-s_1}) \exp\left(-\frac{u^2}{2\frac{s-s_1}{s_2-s_1}\frac{s_2-s}{s_2-s_1}}\right) \frac{u^2 }{\sqrt{2\pi \frac{(s-s_1)^3}{(s_2-s_1)^3}\frac{(s_2-s)^3}{(s_2-s_1)^3}}}du \\
&= {2\over {\sqrt{2\pi}} }\int_0^{+\infty}  \phi_1(v) \exp\left(-\frac{v^2}{2\frac{(s-s_1)(s_2-s)}{s_2-s_1}}\right) \frac{v^2  }{ \sqrt{{(s-s_1)^3(s_2-s)^3}\over (s_2-s_1)^3}}dv.
\end{align*}
By Lemma \ref{1-dim},   $$\lim_{n\to+ \infty}  \bE_y \left[\phi_2 \left( \frac{X([nt]-[ns_2])}{\sigma\sqrt{n}}\right)\right]=  \int_0^{+\infty} \phi_2(u) \frac{2e^{-u^2/2(t-s_2)}}{\sqrt{2\pi (t-s_2)}}du.$$ 
We set 
\begin{align*}
a_n(x, y,z,w)  &= n^2  \Sigma_{[ns_1]}(x, z)  \\
& \hskip 0.5cm\times  
\pr\left[\tau^S(z) > [ns_2] - [ns_1] -1, z+ S([ns_2] - [ns_1]-1) = w \right] 
\\
&\hskip 0.5cm \times \pr[\xi_{1} = -w-y],\\
b_n(y,z,w) &= \bE_y \Bigg[  \phi_2 \left( \frac{X([nt]-[ns_2])}{\sigma\sqrt{n}}\right)\Bigg]   \\
&\times  
\bE \Bigg[  \phi_1\left( \frac{z + S([ns] - [ns_1])} {\sigma\sqrt{n}}\right)| \tau^S(z) > [ns_2] - [ns_1] -1, \\
&\hskip3 cm z+ S([ns_2] - [ns_1]-1) = w\Bigg]
\end{align*} 
Note that $\sum_{z\geq 1}\sum_{y\geq 1} \sum_{w\geq 0}  a_n(x, y,z,w)  = \widetilde{\Sigma}_n(x,s_2,s_1)$.
The behavior as $n \to +\infty$ of the quantity $\widetilde{\Sigma}_n(x,s_2,s_1)$ is given by the 
following Lemma, whose proof is postponed to the last section.

\begin{lemma} \label{lem:M'} 
	For all $0<s<t< 1$, it holds  
	\begin{equation*} \label{M3}
	\lim_{n\to+ \infty} \widetilde{\Sigma}_n(x,t,s) =  \frac{1}{2\pi\sqrt{s(t-s)^3}}.
	\end{equation*}
	Moreover, there exists a positive constant $C_6$ such that, for all  $0<s<t<1$ and $n \geq 0$, 
	\begin{equation*} \label{M4}
	\widetilde{\Sigma}_n(x,t,s) \leq  C_6\frac{ 1+x}{\pi\sqrt{s(t-s)^3}}.
	\end{equation*}
\end{lemma}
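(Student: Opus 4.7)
The plan is to reduce $\widetilde{\Sigma}_n(x,t,s)$ to an operator-theoretic quantity of the form $n^2 T_{[ns]} g_{m_n}(x)$ and then combine the operator renewal theorem of Corollary~\ref{G4.1} with the tail estimate for $\tau^S$ of Corollary~\ref{cororenewal}. Setting $m_n := [nt]-[ns]$ and $g_m(y) := \pr_y[r_1=m] = \pr[\tau^S(y)=m]$, the term $l=0$ in the defining sum vanishes for $n$ large (since $r_0=0\neq[ns]$), so the strong Markov property applied at $r_l$ for $l\geq 1$ yields
\[
\sum_{l\geq 0}\pr_x[r_l=[ns],\, r_{l+1}=[nt]] \;=\; \sum_{y\geq 1}\Sigma_{[ns]}(x,y)\,g_{m_n}(y) \;=\; T_{[ns]} g_{m_n}(x),
\]
by (\ref{renewalsigma}). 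Hence $\widetilde{\Sigma}_n(x,t,s) = n^2\,T_{[ns]} g_{m_n}(x)$.

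From Corollary~\ref{cororenewal} one has $g_m(y) \leq C_3 h(y)/m^{3/2}$ together with the pointwise asymptotic $m^{3/2}g_m(y)\to (c_1/2)h(y)$. Since $h(y)=O(1+y)$, $g_m\in \mathcal{B}_1$ and $|g_m|_1 \preceq 1/m^{3/2}$. As shown in the proof of (R4) of Proposition~\ref{renewalsequence}, $\nu$ has finite first moment, so $\nu(h) < +\infty$; dominated convergence (with dominating function $C_3 h\,\nu$) gives
\[
m^{3/2}\,\nu(g_m) \;\longrightarrow\; \frac{c_1}{2}\,\nu(h).
\]

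Corollary~\ref{G4.1} writes $\sqrt{n}\,T_n = (\pi c_1\nu(h))^{-1}\Pi + D_n$ on $\mathcal{B}_1$ with $\|D_n\|_{\mathcal{B}_1}=:\varepsilon_n\to 0$, and $\Pi\phi=\nu(\phi)\1$. Evaluating at $g_{m_n}$,
\[
\sqrt{[ns]}\,T_{[ns]} g_{m_n}(x) \;=\; \frac{\nu(g_{m_n})}{\pi c_1\nu(h)} \;+\; D_{[ns]} g_{m_n}(x),
\]
where the error satisfies $|D_{[ns]} g_{m_n}(x)| \leq \varepsilon_{[ns]}\,|g_{m_n}|_1(1+x) = o(n^{-3/2})(1+x)$. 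Multiplying by $n^2/\sqrt{[ns]}$ and using the asymptotic of $\nu(g_{m_n})$, the leading term tends to
\[
\frac{n^2}{\sqrt{ns}}\cdot\frac{c_1\nu(h)/(2 n^{3/2}(t-s)^{3/2})}{\pi c_1\nu(h)} \;\longrightarrow\; \frac{1}{2\pi\sqrt{s(t-s)^3}},
\]
while the error contribution is $o(1)$. The uniform bound follows along the same lines: since $\|T_n\|_{\mathcal{B}_1}\preceq 1/\sqrt{n}$ (again by Corollary~\ref{G4.1}) and $|g_{m_n}|_1\preceq 1/m_n^{3/2}$,
\[
\widetilde{\Sigma}_n(x,t,s) \;\leq\; n^2\,\|T_{[ns]}\|_{\mathcal{B}_1}\,|g_{m_n}|_1\,(1+x) \;\preceq\; \frac{1+x}{\sqrt{s(t-s)^3}},
\]
yielding the claimed constant $C_6$.

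The main obstacle is the joint dependence on $n$ in $T_{[ns]} g_{m_n}$: both the operator and its argument vary with $n$, which prevents a direct pointwise asymptotic and is the reason the argument is organized in the Banach space $\mathcal{B}_1$. The key point is that the uniform decay $|g_{m_n}|_1 = O(n^{-3/2})$ is strong enough to absorb the spectral remainder $D_{[ns]}$, so that only the scalar quantity $\nu(g_{m_n})$ contributes to the limit.
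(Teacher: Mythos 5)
Your proof is correct and follows essentially the same route as the paper: both reduce $\widetilde{\Sigma}_n(x,t,s)$ via the strong Markov property to $T_{[ns]}$ applied to the function $y\mapsto \pr_y[r_1=m_n]$, then combine the operator convergence $\sqrt{n}\,T_n \to (\pi c_1\nu(h))^{-1}\Pi$ of Corollary~\ref{G4.1} with the bound and asymptotics of Corollary~\ref{cororenewal} plus dominated convergence for the scalar term $\nu(\cdot)$. The only (immaterial) difference is that the paper pre-normalizes the argument as $\tilde h_m(y)=m^{3/2}\pr_y[r_1=m]$ so that the family is bounded in $\mathcal B_1$, whereas you carry the factor $m_n^{-3/2}$ explicitly through $|g_{m_n}|_1$.
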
 

By Lemmas \ref{lem:M'} and \ref{lem:prod}, we get 
$\lim_{n \to+ \infty} f_n(s_1,s_2) = f(s_1,s_2)$ where 
\begin{align*}
f(s_1,s_2) =&{1\over \pi^{2} \sqrt{ s_1}}\int_0^{+\infty}  \phi_1(v)  \exp\left(-\frac{v^2}{2\frac{(s_2-s)(s-s_1)}{s_2-s_1}}\right) \frac{v^2  }{ \sqrt{{(s-s_1)^3(s_2-s)^3} }}dv\\
& \quad \times \int_0^{+\infty} \phi_2(u) \frac{ e^{-u^2/2(t-s_2)}}{\sqrt{  t-s_2}}du.
\end{align*}
Moreover,  following the  argument in the proof of Lemma \ref{1-dim}, we can show that the sequence  $(|f_n|)_{n\geq 1}$ is uniformly bounded  by a function which is  integrable with respect to Lebesgue measure on $[0,s]\times[s,t]$. 
Hence, using again the Lebesgue dominated convergence theorem, we get 
\begin{align*}
& \lim_{n\to +\infty} A_1(n)  
=   \int_{0 }^s ds_1\int_{ s}^t ds_2 f(s_1, s_2)
\\
&= {1\over \pi^{2} }\int_{0 }^s {ds_1\over  \sqrt{ s_1}}\int_{ s}^t ds_2
\int_0^{+\infty}  \phi_1(v) \exp\left(-\frac{v^2}{2\frac{(s_2-s)(s-s_1)}{s_2-s_1}}\right)  \frac{v^2  }{ \sqrt{{(s-s_1)^3(s_2-s)^3}}} 
\\
& \qquad \qquad\qquad \qquad\qquad \qquad\qquad \qquad \times \int_0^{+\infty} \phi_2(u) \frac{e^{-u^2/2(t-s_2)}}{\sqrt{  t-s_2}}dudv, 
\end{align*}
which yields, using again (\ref{formulaIMK}), 
\begin{align} \label{lim:A1} 
&\lim_{n \to+ \infty} A_1(n) = \frac{2}{\pi \sqrt{ s (t-s)}}  \int_0^{+\infty} \int_0^{+\infty} \phi_1(v) \phi_2(u) e^{-v^2/2s}e^{ - \frac{(u+v)^2}{2(t-s)}} dudv.
\end{align}

\subsubsection{Estimate of $A_2 (n)$} \label{Sec:3.2.2} 
We decompose $A_2 (n)$ as 
\begin{align*}
&\sum_{y=0}^{+\infty} \sum_{k\leq [ns]} \sum_{l\geq 0} \bE_x \Bigg[ \phi_1 \Big( \frac{X([ns])}{\sigma \sqrt{n}}\Big) \phi_2 \Big( \frac{X([nt])}{\sigma \sqrt{n}}\Big); \\
& \hskip 2cm r_l = k, X(k)= y,y + \xi_{k+1}\geq 0, \ldots, y+ \xi_{k+1} + \cdots + \xi_{[nt]}\geq 0 \Bigg]\\
=&\sum_{y=0}^{+\infty} \sum_{k\leq [ns]}  \bE_x \Bigg[ \phi_1 \Big( \frac{y+ \xi_{k+1}+\cdots + \xi_{[ns]}}{\sigma \sqrt{n}}\Big) \phi_2 \Big( \frac{y + \xi_{k+1} + \cdots + \xi_{[nt]}}{\sigma \sqrt{n}}\Big); \\
& \hskip 2cm y + \xi_{k+1} \geq 0, \ldots, y+ \xi_{k+1} + \cdots + \xi_{[nt]} \geq 0 \Bigg]\\
&\hskip 2cm  \times \sum_{l\geq 0}\pr_x[r_l = k, X(k)= y].
\end{align*}
Since $(\xi_k)$ is a i.i.d. sequence, 
\begin{align*}
A_2(n)  =&\sum_{y=0}^{+\infty} \sum_{k\leq [ns]} \Sigma_k(x, y)  \bE \Bigg[ \phi_1 \Big( \frac{y+ S([ns]-k)}{\sigma \sqrt{n}}\Big) \phi_2 \Big( \frac{y + S([nt]-k)}{\sigma \sqrt{n}}\Big);\\
&\hskip 8cm \tau^S(y) > [nt]-k \Bigg]. 
\end{align*}
For $u \in (0,s]$, we denote 
\begin{align*} g_n(u) &= n\sum_{y=0}^{+\infty}    \Sigma_{[nu]}(x, y)  \bE \Bigg[ \phi_1 \Big( \frac{y+ S([ns]-[nu])}{\sigma \sqrt{n}}\Big) \phi_2 \Big( \frac{y + S([nt]-[nu])}{\sigma \sqrt{n}}\Big); \\
 &\hskip 8.4cm \tau^S(y) > [nt]-[nu] \Bigg].
\end{align*} 
Now, let us compute  the pointwise limit on $(0, s]$ of the sequence $(g_n)_{n \geq 1}$. We write $g_n(u) $ as
\begin{align*}
g_n(u) = &n\sum_{y=0}^{+\infty}    \Sigma_{[nu]}(x, y) \\
& \hskip 0.5cm \times \bE \Bigg[ \phi_1 \Big( \frac{y+ S([ns]-[nu])}{\sigma \sqrt{n}}\Big) \phi_2 \Big( \frac{y + S([nt]-[nu])}{\sigma \sqrt{n}}\Big)\ \Big| \tau^S(y) > [nt]-[nu] \Bigg] \\
&\hskip 0.5cm \times \pr_y \left[\tau^S(y) > [nt]-[nu] \right].
\end{align*}
We set 
\[
a_n(x, y)  = n    \Sigma_{[nu]}(x, y)    \pr_y \left[\tau^S(y) > [nt]-[nu] \right],
\]
and
\[
b_n(y)  = \bE \Big[ \phi_1 \Big( \frac{y+ S([ns]-[nu])}{\sigma \sqrt{n}}\Big) \phi_2 \Big( \frac{y + S([nt]-[nu])}{\sigma \sqrt{n}}\Big)\ \Big| \tau^S(y) > [nt]-[nu] \Big].
\]
Note that $\displaystyle \sum_{y=0}^{+\infty} a_n(y) = \widehat{\Sigma}_{[nu]}(x,t,u)$.  
Since $\phi_1, \phi_2$ are bounded and  continuous on $\mathbb R$, it follows from Theorem 3.2 in \cite{Bol} and Theorems 2.23 and 3.4 in \cite{Iglehart74} that 
\begin{align*}
\lim_{n \to +\infty} b_n(y)
&=\lim_{n \to +\infty} \bE\Big[ \phi_1\left( \frac{y+ S(  [ns]-[nu])}{\sigma\sqrt{[nt]-[nu]}} \frac{\sqrt{[nt]-[nu]}}{\sqrt{n}} \right) \\
&  \qquad \times \phi_2 \left( \frac{y+ S([nt]-[nu])}{\sigma\sqrt{[nt]-[nu]}} \frac{\sqrt{[nt]-[nu]}}{\sqrt{n}}\right)\Big|    \tau^S(y) > [nt]-[nu]\Big]\\
&=  \int_0^{+\infty} \int_0^{+\infty}\phi_1(y\sqrt{t-u})\phi_2(z\sqrt{t-u}) 
\Big( \frac{t-u}{s-u}\Big)^{{3/2}}
ye^{-\frac{  t-u }{2(s-u)}y^2} \\
& \qquad  \qquad \qquad \times \frac{e^{-{1\over 2}  {t-u\over t-s}(z-y)^2} - e^{-{1\over 2}  {t-u\over t-s}(z+y)^2}}{\sqrt{2\pi\Big(1 - \frac{s-u}{t-u}\Big)}}dy dz\\
&=  {1\over \sqrt{2\pi(t-s)}} \int_0^{+\infty} \int_0^{+\infty}\phi_1(y')\phi_2(z') 
\frac{\sqrt{t-u}}{ (s-u)^{{3/2}}}
y' 
e^{-\frac{y'^2}{2(s-u)}}\\
& \qquad  \qquad \qquad \times \left(e^{-\frac{(z'-y')^2}{2(t-s)}} - e^{-\frac{(z'+y')^2}{2(t-s)}} \right)dy' dz'.
\end{align*}
Again, we can use the argument in the proof of Lemma \ref{1-dim} to show that the sequence $(g_n)$ converges point wise to $g$ with  
\begin{align*} 
g (u) =  &{1\over\pi^{{3/2}} \sqrt{2  (t-s)}}  {1\over \sqrt{u(s-u)^3}}
\\
&\qquad  \times \int_0^{+\infty} \int_0^{+\infty}\phi_1(y')\phi_2(z') 
y'e^{- {y'^2\over 2(s-u)}}
\left(e^{-{(z'-y')^2\over 2(t-s)}} - e^{- {(z'+y')^2\over 2(t-s)} }\right)dy' dz',
\end{align*}
and $(g_n)$ is also dominated by a function which is integrable on $[0,s]$ with respect to the Lebesgue measure. 
Lebesgue's dominated convergence theorem yields
\begin{align*}
&\lim_{n\to +\infty} A_2 (n)  
 = \lim_{n\to +\infty} \frac{1}{n} \sum_{k\leq [ns]}g_n(k/n) = \int_0^{s} g (u) du   \\
&\quad= {1\over\pi^{{3/2}} \sqrt{2\ (t-s)}} \int_0^s du  \int_0^{+\infty} dy'\int_0^{+\infty}dz'   \\
&\hskip 2cm \times 
\phi_1(y')\phi_2(z') 
\frac{e^{-\frac{y'^2}{2(s-u)}}}{\sqrt{u(s-u)^3}}
\frac{y'}{\sqrt{2\pi(t-s)}}
\left(e^{-\frac{(z'-y')^2}{2(t-s)}} - e^{-\frac{(z'+y')^2}{2(t-s)}} \right)   \\
&\quad= {1\over\pi^{{3/2}}   s\sqrt{2\ (t-s)}}  \int_0^{+\infty} dy'\int_0^{+\infty}dz'\phi_1(y')\phi_2(z') 
\left(e^{-\frac{(z'-y')^2}{2(t-s)}} - e^{-\frac{(z'+y')^2}{2(t-s)}} \right)   \\
& \hskip 4cm \times 
\left(\int_0^1 {y' \over \sqrt{v(1-v)^3}}   e^{-\frac{y'^2}{2s(1-v)}}dv\right) \\
\end{align*}
\begin{align}
&\quad=   {1\over\pi \sqrt{  s (t-s)}}  \int_0^{+\infty} dy' \int_0^{+\infty} dz' \phi_1(y')\phi_2(z')  
e^{-y'^2/2s} 
\left(e^{-\frac{(z'-y')^2}{2(t-s)}} - e^{-\frac{(z'+y')^2}{2(t-s)}} \right).
\label{lim:A2} 
\end{align}

\subsubsection{Conclusion}  
Combining \eqref{lim:A1} and \eqref{lim:A2}, we may write
\begin{align*}
 \lim_{n \to+ \infty}& \bE\left[ \phi_1\left( \frac{X({[ns]})}{\sigma\sqrt{n}}\right) \phi_2 \left( \frac{X([nt])}{\sigma\sqrt{n}}\right)\right] 
\\
 &={1\over\pi \sqrt{  s (t-s)}}  \int_0^{+\infty} dy' \int_0^{+\infty} dz' \phi_1(y')\phi_2(z')  
e^{-y'^2/2s} 
\left(e^{-\frac{(z'-y')^2}{2(t-s)}} + e^{-\frac{(z'+y')^2}{2(t-s)}} \right)
\\
&  = \bE[\phi_1(|B_s|)\phi_2(|B_t|)].
\end{align*}
Using a similar estimate as the one in  \eqref{lagtime}, we get 
$$\lim_{n \to+ \infty} \bE\left[ \phi_1\left(X_n(s)\right) \phi_2 \left(X_n(t)\right)\right]  = \bE[\phi_1(|B_s|)\phi_2(|B_t|)],$$
which concludes the convergence of $(X_n)$ in two-dimensional marginal distribution to a reflected Brownian motion. 
  
 \subsection{Finite dimensional distributions} 		The convergence of $d$-dimensional marginal distributions of $(X_n(t))_{n\geq 1}$ for any $d \geq 2$ may be done by induction on  $d$.  Let us fix $n\geq 1, d\geq 3$, then reals   $ 0<s_1<\cdots < s_d$  and $\phi_1, \ldots, \phi_d$   bounded and Lipschitz continuous   real valued functions defined on $\mathbb{R}$. 
%
 
 Let $\kappa$ denote the first reflection time after $[ns_1]$, i.e., 
 $\kappa = \kappa(n,s_1) = \min\{ k> [ns_1]: X(k-1) + \xi_k < 0\}$.
 We decompose $\displaystyle \bE_x \left [\prod_{i=1}^d \phi_i \left( \frac{X([ns_i])}{\sigma \sqrt{n}}\right) \right]$ as 
 \begin{align*}
	\sum_{j=1}^{d-1}  \sum_{k=[ns_j]+1}^{[ns_{j+1}]} \bE_x \left [\prod_{i=1}^d \phi_i \left( \frac{X([ns_i])}{\sigma \sqrt{n}}\right); \kappa = k \right ]
 	  + \bE_x \left [\prod_{i=1}^d \phi_i \left( \frac{X([ns_i])}{\sigma \sqrt{n}}\right) ; \kappa > [ns_d] \right].
 \end{align*}
 Then we can deal with the terms   $$
 \bE_x \left [\prod_{i=1}^d \phi_i \left( \frac{X([ns_i])}{\sigma \sqrt{n}}\right); \kappa = k \right ]\quad {\rm and} \quad  \bE_x \left [\prod_{i=1}^d \phi_i \left( \frac{X([ns_i])}{\sigma \sqrt{n}}\right) ; \kappa > [ns_d] \right]
 $$ in the same ways as we do for $A_1$ and $A_2$, respectively. 
 
 More precisely, 
 for each $1\leq j \leq d-1$ and $k \in \{[ns_j]+1, \ldots,  [ns_{j+1}]\}$, we write 
 \begin{align*}
& 
	\bE_x \left [\prod_{i=1}^d \phi_i \left( \frac{X([ns_i])}{\sigma \sqrt{n}}\right); \kappa = k \right ]\\
&= \sum_{k_1=0}^{[ns_1] -1} 
\sum_{l\geq 0} \sum_{y\geq 1} \sum_{z\geq 1} \sum_{w\geq 0} 
	\bE_x \left [\prod_{i=1}^d \phi_i \left( \frac{X([ns_i])}{\sigma \sqrt{n}}\right); r_l=k_1, X(k_1)=z, \right.\\
	 &\left.  z+\xi_{k_1+1} \geq 0, \ldots, z + \xi_{k_1+1}+\cdots+ \xi_{k - 2}\geq 0, z +  \xi_{k_1+1}+\cdots+ \xi_{k - 1}= w, w + \xi_{k} = -y \right ]\\
&= \sum_{k_1=0}^{[ns_1] -1} 
\sum_{l\geq 0} \sum_{y\geq 1} \sum_{z\geq 1} \sum_{w\geq 0} 
\bE_x \left [\prod_{i_1=1}^j  \phi_{ i_1}\left( \frac{z+ \xi_{k_1+1}+\cdots + \xi_{[ns_j]}}{\sigma \sqrt{n}}\right) \right. \\
& \hspace{1.5cm} \times  
 \prod_{i_2=j+1}^d  \phi_{i_2} \left( \frac{y+ \xi_{k+1}+\cdots + \xi_{[ns_j]}}{\sigma \sqrt{n}}\right);
r_l=k_1, X(k_1)=z,   z+\xi_{k_1+1} \geq 0, \ldots, \\
&   \hspace{1.5cm}\left.  z + \xi_{k_1+1}+\cdots+ \xi_{k - 2}\geq 0, z +  \xi_{k_1+1}+\cdots+ \xi_{k - 1}= w,   w + \xi_{k} = -y \right ]\\
&= \sum_{k_1=0}^{[ns_1] -1} \sum_{z\geq 1} \Sigma_{k_1}(x,z)  
  \sum_{y\geq 1}  \sum_{w\geq 0} \bE_y \left[\prod_{i_2=j+1}^d  \phi_{i_2} \left( \frac{X([ns_j]-k_2)}{\sigma \sqrt{n}}\right)\right]\\
& \hspace{1.5cm} \times  \bE\left [\prod_{i_1=1}^j  \phi_{i_1} \left( \frac{z+ S([ns_j]-k_1)}{\sigma \sqrt{n}}\right) \Big |   
\tau^S(z) > k-k_1 -1, z + S(k-k_1 - 1) = w \right] \\
& \hspace{1.5cm} \times \mathbb P [ \tau^S(z) > k - k_1 - 1, z + S(k-k_1 - 1) = w] \mathbb P[\xi_1 = -w-y].
\end{align*}  
Now we can use the induction hypothesis and  Corollary 2.5   in \cite{CC} to deal with the first and the second expectations.

\subsection{Tightness}
Recall that the modulus of continuity of a function $f:[0,1] \to \mathbb{R}$ is defined by 
$$w_f(\delta) = \sup_{ t,s \in [0,1], |t-s| < \delta}|f(t) - f(s)|.$$
It is clear that $w_{X}(\delta) \leq w_{S}(\delta)$. Using Theorem 7.3 in \cite{Billingsley}, the tightness of $X$ follows directly from the one of the classical random walk $(S(n))_{n \geq 0}$. 
We achieve  the proof of  Theorem \ref{meander}, 
applying Theorem 7.1 in \cite{Billingsley}. 

\section{Auxiliary proofs}\label{sectionAuxiliaryproofs}
{\bf Proof of Lemma \ref{lem:M}.} 
By setting $h_n(y) =  \sqrt{n} \pr_y[r_1 > n]$, the Markov property yields 
\begin{align*}
\widehat{\Sigma}_n(x,t,s) 
&= n\sum_{l\geq 0} \bE_x \left[\pr_{X(r_l)}[r_1 \circ \theta^{r_l} > [nt] - [ns]];r_l = [ns]\right]\\
&={ \sqrt{n}\over \sqrt{[nt]-[ns]}} \ \sqrt{n}\sum_{l\geq 0} \bE_x \left[h_{[nt]-[ns]}(X(r_l));r_l = [ns]\right]\\
&= {1+o(n)\over \sqrt{s(t-s)}}  \sqrt{[ns]} T_{[ns]}(h_{[nt]-[ns]})(x).
\end{align*} 
Let us prove that  $\sqrt{[ns]} T_{[ns]}(h_{[nt]-[ns]})(x)\rightarrow \frac{1}{ \pi }   $ as $n\to +\infty$. Indeed,
\begin{align*}
\left \vert \sqrt{[ns]}T_{[ns]}(h_{[nt]-[ns]})(x) - \frac{1}{ \pi }\right\vert  &\leq B_1(n) + B_2(n), 
\end{align*}
with
\begin{align*} 
B_1(n)  &= \left \vert \sqrt{[ns]}T_{[ns]}(h_{[nt]-[ns]})(x) -\frac{1}{ \pi  \nu(h) } \nu (h_{[nt]-[ns]}) \right\vert, \quad \text{and} \\ 
B_2(n)  &= \frac{1}{ \pi  \nu(h) } \left\vert \nu (h_{[nt]-[ns]})-  \nu(h) \right\vert.
\end{align*} 
By Lemma \ref{LemA}, it holds $0 \leq h_n(y) \leq  C_1 h(y)$, with  $h(y)= O(y)$, so that   the sequence $( h_n)_{n \geq 1}$ is bounded in $\mathcal B_\alpha$. Thus,    Corollary \ref{G4.1} yields
\[
B_1(n)\leq(1+x)\left \vert \sqrt{[ns]}T_{[ns]}-\frac{1}{\pi  \nu(h) } \Pi  \right\vert_ \alpha \vert h_{[nt]-[ns]} \vert_\alpha \longrightarrow 0 \quad \text{as} \quad n\to +\infty.
\]
Similarly, by Lemma \ref{LemA} and the dominated convergence theorem,
\[
\lim_{n \to +\infty} \left\vert \nu (h_{[nt]-[ns]})-  \nu(h) \right\vert=0 , 
\]
so that $B_2(n) \longrightarrow 0$ as $n \to +\infty$.
\rightline{$\Box$}

\noindent {\bf Proof of Lemma \ref{lem:M'}.} 
By setting $\tilde{h}_n(y) =   n^{3/2}  \pr_y[r_1 = n]$, the Markov property yields 
\begin{align*}
\widetilde{\Sigma}_n(x,s,t)
&= n^2\sum_{l\geq 0} \bE_x \left[\pr_{X(r_l)}[r_1 \circ \theta^{r_l} = [nt] - [ns]];r_l = [ns]\right]\\
&={ n^{3/2}\over ([nt]-[ns])^{3/2}} \ \sqrt{n}\sum_{l\geq 0} \bE_x \left[\tilde h_{[nt]-[ns]}(X(r_l));r_l = [ns]\right]\\
&= {1+o(n)\over \sqrt{s}(t-s)^{3/2}}  \sqrt{[ns]} T_{[ns]}(\tilde h_{[nt]-[ns]})(x).
\end{align*} 
By Corollary \ref{cororenewal}, it holds $0 \leq \tilde h_n(y) \leq  C_3 h(y)$, with  $h(y)= O(y)$, so that   the sequence $( \tilde h_n)_{n \geq 1}$ is bounded in $\mathcal B_\alpha$.  We conclude as  above  to prove   Lemma  \ref{lem:M}.

\rightline{$\Box$}

\end{document}